\pgfplotsset{compat=newest}
\newtheoremstyle{mytheoremstyle} % name
{} % Space above
{} % Space below
{\itshape} % Body font
{} % Indent amount
{\bfseries} % Theorem head font
{.} % Punctuation after theorem head
{\labelsep} % Space after theorem head
{} % Theorem head spec (can be left empty, meaning ?normal?)
\theoremstyle{mytheoremstyle}	
\renewenvironment{proof}[1][\proofname]{\par
	\pushQED{\qed}%
	\normalfont \topsep6\p@\@plus6\p@\relax
	\trivlist
	\item\relax
	{\bfseries
		#1\@addpunct{.}}\hspace\labelsep\ignorespaces
}{%
	\popQED\endtrivlist\@endpefalse
}
\newtheorem{assumption}{Assumption}
\newtheorem{corollary}{Corollary}
\newtheorem{lemma}{Lemma}
\newtheorem{theorem}{Theorem}
\newcommand{\ubar}{\ushort}
\newcommand{\xsys}{\ubar x}
\newcommand{\usys}{\ubar u}
\newcommand{\xmpc}{x}
\newcommand{\umpc}{u}
\newcommand{\dt}{\Delta t}
\newcommand{\Nbb}{\mathbb{N}}
\newcommand{\Rbb}{\mathbb{R}}
\newcommand{\Mcal}{\mathcal{M}}
\newcommand{\Ncal}{\mathcal{N}}
\newcommand{\Vcal}{\mathcal{V}}
\newcommand{\Wcal}{\mathcal{W}}
\newcommand{\Zcal}{\mathcal{Z}}
\newcommand{\TT}			{\textsf{T}}
\newcommand{\dd}			{\;\mathrm{d}}
\newcommand{\norm}			[1]{  \left\| #1 \right\| }
\newcommand{\supnorm}		[1]{  \norm{#1}_{L^{\infty}} }
\newcommand{\Nto}[1]{\Ncal_{#1\to}}
\newcommand{\Ngets}[1]{\Ncal_{#1\gets}}
\DeclareMathOperator{\subjto}{s.t.}
\DeclareMathOperator{\diag}{diag}
\DeclareMathOperator{\mysup}{sup}
\DeclareMathOperator{\mylim}{lim}
\DeclareMathOperator{\mymin}{min}
\DeclareMathOperator{\mymax}{max}
\DeclareMathOperator{\mylog}{log}
\DeclareMathOperator{\myarg}{arg}
\DeclareMathOperator{\avg}{avg}
\newlength\fheight
\newlength\fwidth
\begin{document}
\thispagestyle{scrplain}

\begin{center}
{\Large\bfseries
Distributed model predictive control for continuous-time
nonlinear systems based on suboptimal ADMM\,\footnotemark}
\footnotetext{
	This work was funded by the Deutsche Forschungsgemeinschaft
	(DFG, German Research Foundation) under project no. GR 3870/4-1.
}
\\[2ex]
{\large Anja Bestler\footnotemark and Knut Graichen}
\\[2ex]
Institute for Measurement, Control and Microtechnology,
Ulm University,\\
Albert-Einstein-Allee 41, 89081 Ulm, Germany
\end{center}

\footnotetext{
	Correspondence to: Anja Bestler, Institute of Measurement, Control, and Microtechnology, Ulm University, Albert-Einstein-Allee 41, 89081 Ulm, Germany, Email: anja.bestler@uni-ulm.de
}

\textbf{Abstract:} The paper presents a distributed model predictive
control (DMPC) scheme for continuous-time nonlinear systems based on the
alternating direction method of multipliers (ADMM). A stopping
criterion in the ADMM algorithm limits the iterations and therefore
the required communication effort during the distributed MPC
solution at the expense of a suboptimal solution. Stability results
are presented for the suboptimal DMPC scheme under two different ADMM
convergence assumptions. In particular, it is shown that the required iterations
in each ADMM step are bounded, which is also confirmed in simulation studies. 

\textbf{Key words:} distributed model predictive control, alternating direction multiplier method, networked
systems, nonlinear systems, stability, suboptimality

\section{Introduction}
%==========================================================================================%

Model predictive control (MPC) has become a popular control method due
to its applicability for linear and nonlinear systems,
the ability to handle constraints, and its optimal performance in terms
of the cost function to be
minimized~\cite{Mayne:AUT:2000,Camacho:book:2003,Mattingley:CS:2011}. 
Recently, the research on MPC for large-scale systems has led to the
concept of distributed model predictive control (DMPC), see e.g.\
\cite{Camponogara:CST:2002,Scattolini:JPC:2009,Negenborn:book:2014}, where
subsystems of the global systems are controlled by local MPCs.
The union of subsystem and controller is commonly called agent. If the
MPC agents are allowed to exchange information among each other and apply
a suitable coordination scheme, they can cooperatively solve the
optimal control problem (OCP) underlying the MPC formulation
for the complete system. The motivation for this multi-agent setup of
the controller lies within its flexibility, since adding or removing
agents does not require a complete redesign of a central MPC.

Dynamically coupled systems have been in the focus of
DMPC research over the last years. Typically, the coupling between
these systems is given by physical interconnection and cover a wide
range of applications, for instance, cooperative payload transport in
robotic applications \cite{Fink:IJRR:2011} or transportation systems
such as smart grids~\cite{Venkat:CST:2008,Stadler:SYSCON:2016} 
and water distribution
networks~\cite{Leirens:ACC:2010,Ocampo-Martinez:IET:2012,Hentzelt:MSC:2014}. 
There exists a variety of DMPC schemes in the literature
that can be structured in terms of the considered system
class and coupling, stability proporties and algorithmic considerations.

The majority of the literature on DMPC for dynamically coupled systems
considers linear, discrete-time systems, see e.g.\
\cite{Giselsson:Springer:2014,Zeilinger:CDC:2013,Mueller:NMPC:2012,Koegel:DYCOPS:2013,Farina:CST:2014},
though continuous-time as well as nonlinear systems have been 
in the focus of the recent past as well
\cite{Farokhi:Springer:2014,Mueller:IJRNC:2012,Kozma:Springer:2014,Farina:SCL:2014}.  
In most cases, the subsystems are assumed to be linearly coupled
to the state and/or control variables of their neighbors
\cite{Giselsson:Springer:2014,Zeilinger:CDC:2013,Koegel:DYCOPS:2013,Farina:SCL:2014}.
Alternative formulations
concern linear output variables or functions 
\cite{Kozma:Springer:2014,Farina:CST:2014} or coupling
constraints~\cite{Mueller:NMPC:2012,Mueller:IJRNC:2012}.

A common approach to ensure stability of a DMPC scheme for dynamically
coupled systems is to use local terminal costs and terminal set
constraints, which in connection with local controllers are rendered
invariant and lead to a reduction of the terminal costs in the sense
of control Lyapunov functions (CLF)
\cite{Zeilinger:CDC:2013,Mueller:NMPC:2012,Mueller:IJRNC:2012,Koegel:DYCOPS:2013}. 
A further approach to guarantee stability is relaxed dynamic
programming (RDP), where the existence of a control law in connection
with a suitable descent condition is assumed \cite{Gruene:TAC:2008},
that can be extended to DMPC as
well~\cite{Farokhi:Springer:2014,Giselsson:Springer:2014}. 
From a practical point of view, an important issue directly
linked to stability is the tradeoff to be made between communication
effort and control performance.
For instance, a limited number of communication steps within a DMPC
scheme will lead to a suboptimal solution, for which stability still
has to be ensured.
Moreover, terminal set constraints that are often assumed
for stability are unfavorable from a numerical viewpoint, leading 
to an increased computational load for the single
MPC agents compared to an MPC formulation without terminal
constraints~\cite{Limon:TAC:2006,Graichen:TAC:2010}.

The numerical realization of DMPC requires a suitable 
decomposition scheme to allow for a parallel solution on the single
MPC agents. One approach is to discretize the high-dimensional
MPC optimization problem and use tailored parallelization schemes
\cite{Talukdar:CDC:2005,Doan:JPC:2011,Necoara:CDC:2008,Necoara:CDC:2009}.
These contributions mainly consider linear, discrete-time dynamics
due to their favorable decomposability over nonlinear
systems, which so far have been considered only sporadically
\cite{Necoara:CDC:2009,Kozma:Springer:2014}. 
Another numerical realization of DMPC for coupled dynamical systems
concerns primal decomposition, where a distributed solution is
realized by local sensitivity-based approximations of the objective
function of the neighboring
subsystems~\cite{Venkat:CDC:2005,Venkat:CST:2008,Scheu:JPC:2011}. 
The primal decomposition approach takes advantage of the superposition
principle of linear systems, though certain extensions to
nonlinear systems exist~\cite{Scheu:ACC:2010,Stewart:JPC:2011}.
A drawback of primal decomposition is that all local agents require
knowledge of the overall system dynamics (an exception
is~\cite{Scheu:JPC:2011}), which restricts the 
flexibility and scalability of the DMPC scheme.

Another class of decomposition schemes concerns dual decomposition,
where coupling constraints are taken into account by multipliers. The
resulting formulation consists of a sum of cost functions that are
coupled via equality constraints~\cite{Rantzer:ACC:2009} and enable a
parallel solution. The single subproblems are coordinated by
the update of the multipliers serving as dual variables in the
optimization. A variety of DMPC schemes relies on dual decomposition,
see e.g.\
\cite{Cheng:JPC:2005,Negenborn:INCOM:2006,Necoara:CDC:2009,Giselsson:CDC:2010,Doan:JPC:2011,McNamara:IFACWC:2011}. 
Recent contributions in this field also consider extensions
of dual decomposition to improve convergence, such as accelerated gradient
methods~\cite{Giselsson:Springer:2014} and in particular
ADMM (Alternating Direction Method of Multipliers)
\cite{Koegel:ACCP:2012,Farokhi:Springer:2014}.

This contribution presents a DMPC scheme for nonlinear,
continuous-time systems, where the coupling of the subsystems is given
by the state variables of its neighbors. The centralized MPC problem
is formulated without terminal set constraints as the basis for an
efficient numerical solution. The DMPC scheme is realized by dual
decomposition and ADMM. A suitable stopping criterion is introduced to
limit the ADMM iterations and therefore the 
communication effort between the local MPC agents. Besides the
neighbor-to-neighbor communication between the ADMM iterates, the
stopping criterion requires one global communication step or
alternatively the definition of a master agent. 
The premature stopping of the ADMM algorithm leads to a suboptimal
solution, where the remaining residual of the consistency conditions
can be interpreted as the optimization error in each DMPC step.
Assuming a linear convergence property for the ADMM algorithm itself,
asymptotic stability as well as exponential decay of the optimization
error is shown. 

Though linear convergence of ADMM has recently been proved for
different ADMM settings in finite and infinite
dimensions~\cite{Shi:TSP:2014,Hong:MP:2017,Davis:MOR:2017,Giselsson:TAC:2017},
only few results exist for non-convex problems or nonlinear coupling. 
To this end and motivated by practical
experiences, the linear ADMM convergence is relaxed to R-linear
convergence in a next step. Under this assumption, it is shown that
there exists an upper, finite bound on the ADMM iterations,
such that the ADMM stopping criterion is satisfied and exponential
stability in closed loop as well as exponential decay of the
optimization error hold. This 
result also implies that the stopping criterion can be replaced by a
(sufficiently large) fixed number of iterations in each DMPC step to
circumvent the aforementioned global communication step for the
stopping criterion. 
Two example systems are used to illustrate the ADMM-based
DMPC scheme and to show the finiteness of the ADMM iterations as well
as the scalability of the approach.

The paper is outlined as follows: Section~\ref{sec:problem_statement}
introduces the coupled system dynamics and MPC formulation without terminal
constraints and summarizes stability results for the centralized MPC case. 
The decomposition and the DMPC scheme based on 
ADMM with a tailored stopping criterion is described in Section~\ref{sec:DMPC}. 
The stability analysis of the suboptimal DMPC scheme is carried out in
Section~\ref{sec:stability_results_dmpc}. Finally, 
Section~\ref{sec:simulation_results} presents simulation results for
two coupled systems to illustrate the ADMM-based DMPC scheme, before
conclusions are drawn in Section~\ref{sec:conclusions}.

Several notations and norms are used within the paper.
The standard norm used for a vector $ z \in \Rbb^n $ is the Euclidean norm
$\norm z := \norm z _2$ along with its induced matrix norm
$\norm A := \norm A_2$. The supremum norm $\norm{z}_{\infty}$ will be used occasionally.
In addition, the supremum norm of a function $ z(t) \in \Rbb^n$, $t \in [ 0, T ] $ is defined by 
$ \supnorm{z}  := \mysup_{t \in [0,T]} \norm{z(t)} _2$. 
A $\sigma$-neighborhood $ S^{\sigma} $ of a set $S \subset \Rbb^n $ is defined as
$ S^{\sigma} := \left\{ x \in \Rbb^n \,| \norm{x-\tilde{x}} \leq \sigma ~ \forall \tilde{x} \in S \right\}$.
Finally, system variables are underlined
  (e.g.\ $\ubar x$), in order to distinguish them from MPC-internal variables.

\section{System description and centralized MPC properties}
%==========================================================================================%
\label{sec:problem_statement}

The DMPC scheme in this paper is presented for nonlinear
continuous-time systems, where the subsystems are coupled via their
state variables. This section presents the problem statement along
with the considered MPC formulation and summarizes the stability
properties in a centralized MPC setting that serve as the basis for
the subsequent DMPC scheme in the following sections.

\subsection{Problem formulation}
%------------------------------------------------------------------------------------------%

Distributed systems consisting of coupled dynamical systems are
conveniently described by a directed graph $ \mathcal{G} = ( \Vcal, \mathcal{E} ) $, 
where the nodes $i\in \Vcal =\{1, \dots, N\}$ represent the dynamical
subsystems and the edges $\mathcal{E} \subseteq \Vcal \times \Vcal$
reflect the coupling between two subsystems.
The dynamics of subsystem $i\in\Vcal$ is described by
\begin{equation}\label{eq:dyn_subsys}
  \dot{\xsys}_i = f_i ( \xsys_i, \usys_i, [\xsys_j]_{j\in\Ngets{i}} )
\end{equation}
with the state $\xsys_i \in \Rbb^{n_i}$ and control $\usys_i \in
\Rbb^{m_i}$. 
The coupling to neighboring subsystems is given by the states $\xsys_j$ 
using the stacking notation
$[\xsys_j]_{j\in\Ngets{i}}\in\Rbb^{p_i}$ with 
$p_i := \sum_{ j \in \Ngets{i}} n_j$ and $p=\sum_{i\in\Vcal} p_i$, whereby 
$\Ngets{i} = \{j: (j,i) \in \mathcal{E}, j \not=i \}$. 
Vice versa, $\Nto{i} = \{j: (i,j) \in \mathcal{E}, j \not= i \}  $
represent the neighbors that subsystem $i\in\Vcal$ influences.
In view of a distributed control scheme, $\Ngets{i}$ represent the 
``sending'' neighbors, whereas $\Ngets{i}$ denote the ``receiving''
neighbors. 

\begin{figure}
	\centering
	{
		\normalsize
		\resizebox{0.3\textwidth}{!}{\import{figs/inkscape/}{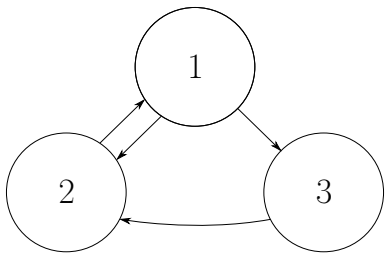}}}
	\caption{Coupling structure for example system~\eqref{eq:DMPC_simple_example}.}
	\label{fig:couplStrucure}
\end{figure}

Figure~\ref{fig:couplStrucure} shows an example of a 
coupled system with the dynamics
\begin{align} \nonumber
\dot \xsys_1 &= f_1(\xsys_1,\usys_1,\xsys_2)
\\ \label{eq:DMPC_simple_example}
\dot \xsys_2 &= f_2(\xsys_2,\usys_2,\xsys_1,\xsys_3)
\\ \nonumber
\dot \xsys_3 &= f_3(\xsys_3,\usys_3,\xsys_1) \,.
\end{align}
The directed graph is defined by the knots $\Vcal=\{1,2,3\}$ and 
edges $\mathcal{E}=\{(1,2),(1,3),(2,1),(3,2)\}$.
The sending and receiving neighborhoods are 
$\Ngets{1}=\{2\}$, $\Ngets{2}=\{1,3\}$, $\Ngets{3}=\{1\}$ and 
$\Nto{1}=\{1,3\}$, $\Nto{2}=\{1\}$, $\Ngets{3}=\{2\}$, respectively.

The coupled subsystems~\eqref{eq:dyn_subsys} can be equivalently written in the
centralized form
\begin{equation}
\label{eq:dyn_sys}
\dot{\xsys}  = F (\xsys,\usys)
\end{equation}
with $F=[f_i]_{i\in\Vcal}$ and the stacked state and control vectors 
$\xsys = [ \xsys_i ]_{ i \in \Vcal }\in\Rbb^{n}$, 
$\usys = [ \usys_i ]_{ i \in \Vcal } \in \Rbb^{m}$ 
of dimension $ n := \sum_{i \in \Vcal} n_i$ and $m := \sum_{i \in \Vcal} m_i $,
respectively.
Without loss of generality, the control task consists in controlling
each subsystem \eqref{eq:dyn_subsys} to the origin, i.e.\
$0=f_i(0,0,0)$, $i\in\Vcal$ or equivalently $0=F(0,0)$ in the
centralized form~\eqref{eq:dyn_sys}. 

The MPC scheme to be considered is based on the online solution of the
optimal control problem (OCP) 
\begin{subequations}\label{eq:gl_ocp_in_gl_quant}
  \begin{align} \label{eq:gl_ocp_in_gl_quant_cost}
    \mymin\limits_{\umpc} \quad & J(\umpc,\xsys_k) := V( \xmpc(T) ) + \int_{0}^{T} { l ( \xmpc(\tau), \umpc(\tau) ) \dd \tau } \\
    % ....................................................................................
    \subjto  \quad & \dot{\xmpc}  = F (\xmpc,\umpc)\,, \quad \xmpc(0) = \xsys_k\\
    \label{eq:gl_ocp_in_gl_quant_constr}
    & \umpc \in U \text{,}
  \end{align}
where the cost functional \eqref{eq:gl_ocp_in_gl_quant_cost} with the
horizon length $T>0$ consists
of the integral and terminal cost functions for the single subsystems,
i.e.\
\begin{equation}
\label{eq:gl_ocp_in_gl_quant_cost_sum}
V(\xmpc) := \sum_{i\in \Vcal} V_i(\xmpc_i)\,, \quad
l(\xmpc,\umpc) := \sum_{i\in \Vcal} l_i(\xmpc_i,\umpc_i)
\end{equation}
\end{subequations}
with $l_i:\Rbb^n \times \Rbb^m \to \Rbb_0^+ $ and $V_i: \Rbb^n \to \Rbb_0^+$,
respectively. 
The set $U$ is the Cartesian product $\prod_{i\in\Vcal}U_i$ of the
compact and convex constraint sets $U_i$ for the controls $u_i$ of
the subsystems \eqref{eq:dyn_subsys}.
The initial value $\xsys(t_k) = \xsys_k = [ \xsys_{k,i} ]_{i \in \Vcal}$ is the
current (measured or observed) state of the system~\eqref{eq:dyn_subsys} at
sampling instant $t_k$.
The optimal solution of \eqref{eq:gl_ocp_in_gl_quant} is denoted by
$\xmpc^*(\tau;\xsys_k)$ and $\umpc^*(\tau;\xsys_k)$, $\tau\in[0,T]$
with the optimal (minimal) cost 
\begin{equation}
\label{eq:Jopt}
J^*(\xsys_k):=J(\umpc^*(\cdot;\xsys_k),\xsys_k)\,.
\end{equation}
As ususally done in MPC, the first part of the control trajectory
is applied to each subsystem~\eqref{eq:dyn_subsys} on the current sampling
interval
\begin{equation}
\label{eq:optcon:contrlaw}
\usys_i(t_k+\tau) = \umpc_i^*(\tau;\xsys_k) 
%=: \kappa_i^*(x^*(\tau;\ubar x_k);\ubar x_k)
\,, \quad \tau\in[0,\dt)\,, \quad i\in\Vcal
\end{equation}
with the sampling time $\Delta t>0$.
From the centralized viewpoint and by the principle of optimality, the
optimal control can be interpreted as a nonlinear control law of the form 
\begin{equation}
\label{eq:optcontrlaw_centralized}
\umpc^*(\tau;\xsys_k) =:\kappa^*(\xmpc^*(\tau;\xsys_k);\xsys_k) \,,
\quad
\tau\in[0,\Delta t)\,.
\end{equation}
The integral and terminal cost functions in~\eqref{eq:gl_ocp_in_gl_quant_cost_sum}
are typically designed in quadratic form. In particular, it is assumed that there
exist constants $m_l,M_l>0$ and $m_V,M_V\ge 0$ such that the
centralized cost functions satisfy
\begin{equation}
  \label{eq:bounds_cost}\arraycolsep1mm
  \begin{array}{rcl}
    m_l  (\norm x ^2 + \norm u ^2) \leq & l(x,u) &\leq M_l ( \norm x ^2 + \norm u ^2 ) \\
    m_V \norm x ^2 \leq & V(x) &\leq M_V \norm x ^2 \text{.}
  \end{array}
\end{equation}
The dynamics function \eqref{eq:dyn_subsys}, resp.\
\eqref{eq:dyn_sys}, and the cost functions
\eqref{eq:gl_ocp_in_gl_quant_cost_sum} are supposed to be  
continuously differentiable. Moreover, throughout the paper, it is
assumed that every bounded control
trajectory $u(t)\in U$, $t\in[0,T]$ yields a bounded state response 
$x(t)$, $t\in[0,T]$ and that
an optimal solution of \eqref{eq:gl_ocp_in_gl_quant} exists.

\subsection{Centralized MPC}
%------------------------------------------------------------------------------------------%
\label{sec:MPC_centralized}

The MPC scheme is based on an OCP
formulation~\eqref{eq:gl_ocp_in_gl_quant} without terminal
constraints, which is motivated from the practical viewpoint to
reduce the computational burden in an actual implementation. This
section summarizes stability results from the literature on MPC without
terminal constraints~\cite{Limon:TAC:2006,Graichen:TAC:2010,Graichen:INTECH:2012}.
These results also serve as the basis for investigating the stability
properties of the DMPC scheme developed in the next sections.

Several assumptions are necessary at this point.
A common assumption in MPC is that the terminal cost $V(x)$ is
designed as a control Lyapunov function (CLF) detailed as follows.

\begin{assumption}
  \label{as:clf_on_omega}
  There exists a feedback law 
  $ u = \kappa_V (x) \in U $ 
  and a non-empty compact set 
  $ \Omega_{\beta}= \left\{ x \in \Rbb^n: V(x) \leq \beta \right\}$ 
  such that 
  $ \dot{V} ( x, \kappa_V(x) ) +  l ( x, \kappa_V(x) ) \leq 0$ $\forall x \in \Omega_{\beta}$ 
  with $ \dot{V} := (\partial V / \partial x ) F $.
  %\ie $ \kappa_V(x) $ is a CLF on $ \finalset$.
\end{assumption}

Provided that the linearization of the system~\eqref{eq:dyn_sys} about the origin is
stabilizable, the terminal cost $V(x)$ and a linear feedback law
$\kappa_V(x)$ can be computed by solving a Riccati or Lyapunov
equation, which stabilizes the nonlinear system on a (possibly small)
set $\Omega_\beta$~\cite{Michalska:TAC:1993,Parisini:AUT:1995,Chen:AUT:1998}.

\begin{lemma} \label{lem:MPC:xT}
  Suppose that Assumption \ref{as:clf_on_omega} holds and consider the set 
  \begin{equation}
    %\label{eq:levelsetdef}
    \label{eq:levelsetalpha}
    \Gamma_\alpha = \left\{x \in \Rbb^n \,|\, J^*(x) \leq \alpha
    \right\}\,, \quad
    \alpha = \beta \left( 1 + \frac{m_l}{M_V}T \right)\,.
  \end{equation}
  Then, $\xmpc^*(T;\xsys_k)\in\Omega_\beta$ and $\Omega_\beta\subseteq\Gamma_\alpha$.
\end{lemma}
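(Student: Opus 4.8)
The plan is to treat the two assertions separately. The inclusion $\Omega_\beta\subseteq\Gamma_\alpha$ follows directly by exhibiting an admissible control whose cost is cheap, whereas the terminal-state claim $\xmpc^*(T;\xsys_k)\in\Omega_\beta$ (which implicitly presumes $\xsys_k\in\Gamma_\alpha$, i.e.\ $J^*(\xsys_k)\le\alpha$) is established by contradiction. The common engine in both parts is the CLF descent inequality from Assumption~\ref{as:clf_on_omega}, telescoped along a trajectory driven by the feedback $\kappa_V$.

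For the inclusion I would take any $x\in\Omega_\beta$ and apply the CLF feedback $\kappa_V$. Since $\dot V+l\le 0$ on $\Omega_\beta$ and $l\ge 0$, the value $V$ is non-increasing along this trajectory, so $\Omega_\beta$ is forward invariant and the endpoint stays in $\Omega_\beta$; the candidate is admissible because $\kappa_V(x)\in U$. Integrating the descent inequality gives $\int_0^T l\dd\tau\le V(x)-V(\xmpc(T))$, so the cost of this candidate is at most $V(\xmpc(T))+\int_0^T l\dd\tau\le V(x)\le\beta$. By optimality, $J^*(x)\le\beta\le\alpha$, hence $x\in\Gamma_\alpha$.

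For the terminal-state claim I would argue by contradiction, assuming $V(\xmpc^*(T;\xsys_k))>\beta$. The key step, and the part I expect to be the main obstacle, is a principle-of-optimality lemma: if the optimal trajectory ever visits $\Omega_\beta$, say $\xmpc^*(\tau_0;\xsys_k)\in\Omega_\beta$ for some $\tau_0\in[0,T]$, then its endpoint already lies in $\Omega_\beta$. I would prove this by splicing the optimal control on $[0,\tau_0]$ with the feedback $\kappa_V$ on $[\tau_0,T]$; the spliced trajectory agrees with $\xmpc^*$ up to $\tau_0$, stays in $\Omega_\beta$ thereafter, and by the same telescoping its tail cost is at most $V(\xmpc^*(\tau_0;\xsys_k))$. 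Optimality of $J^*$ then forces $V(\xmpc^*(T;\xsys_k))+\int_{\tau_0}^T l\dd\tau\le V(\xmpc^*(\tau_0;\xsys_k))\le\beta$, so $V(\xmpc^*(T;\xsys_k))\le\beta$, contradicting the assumption.

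Taking the contrapositive, the assumption $V(\xmpc^*(T;\xsys_k))>\beta$ forces $V(\xmpc^*(\tau;\xsys_k))>\beta$ for all $\tau\in[0,T]$. I would then lower-bound the running cost by $V$ using the quadratic-type bounds~\eqref{eq:bounds_cost}: from $l\ge m_l\norm{x}^2$ and $V\le M_V\norm{x}^2$ one obtains $l\ge(m_l/M_V)V$ along the trajectory, whence $\int_0^T l\dd\tau>(m_l/M_V)\beta T$. Adding $V(\xmpc^*(T;\xsys_k))>\beta$ yields $J^*(\xsys_k)>\beta(1+\tfrac{m_l}{M_V}T)=\alpha$, contradicting $\xsys_k\in\Gamma_\alpha$. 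This closes the argument; the only delicate points are verifying forward invariance of $\Omega_\beta$ under $\kappa_V$ and the feasibility and optimality comparison in the splicing step.
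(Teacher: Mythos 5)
Your proof is correct, and it follows essentially the same route as the argument the paper relies on (the paper itself defers the proof to \cite{Graichen:INTECH:2012}, where the standard proof proceeds exactly this way): forward invariance of $\Omega_\beta$ under $\kappa_V$ plus the telescoped CLF inequality for the inclusion $\Omega_\beta\subseteq\Gamma_\alpha$, and the splicing/principle-of-optimality step showing that $\xmpc^*(T;\xsys_k)\notin\Omega_\beta$ forces the whole optimal trajectory outside $\Omega_\beta$, so that $l\ge (m_l/M_V)V>(m_l/M_V)\beta$ yields $J^*(\xsys_k)>\beta\bigl(1+\tfrac{m_l}{M_V}T\bigr)=\alpha$, contradicting $\xsys_k\in\Gamma_\alpha$. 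You also correctly identified the implicit hypothesis $\xsys_k\in\Gamma_\alpha$ needed for the terminal-state claim.
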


Lemma~\ref{lem:MPC:xT} states that the end point of the optimal
trajectory $\xmpc^*(T;\xsys_k)$ automatically lies inside the region
$\Omega_\beta$ of the CLF terminal cost. Moreover, $\Gamma_\alpha$
contains the region $\Omega_\beta$ and can be enlarged by increasing
$T$ as indicated by \eqref{eq:levelsetalpha}.
The proof of Lemma~\ref{lem:MPC:xT} can be found in
\cite{Graichen:INTECH:2012}.

This result eventually leads to the following
Theorem~\ref{theo:exp_stab_centr_MPC} concerning the stability 
of the centralized MPC scheme over the domain of attraction
$\Gamma_\alpha$.

\begin{theorem}  \label{theo:exp_stab_centr_MPC}
  Suppose that Assumption~\ref{as:clf_on_omega} holds. Then, 
  \begin{equation}
    \label{eq:cost_decrease_centr_MPC_terminal_cost}
    J^*(\xmpc^*(\dt,\xsys_k)) \le J^*(\xsys_k) - \int_0^{\dt}
    l(\xmpc^*(\tau;\xsys_k),\umpc^*(\tau;\xsys_k)) \dd\tau \quad
    \forall\xsys_k \in \Gamma_\alpha
  \end{equation}
  and the centralized MPC scheme 
  stabilizes the origin in the sense of 
  $\mylim_{t\to\infty}\|\xsys(t)\|=0$ for all
  $\xsys(0)=\xsys_0 \in \Gamma_\alpha$.
\end{theorem}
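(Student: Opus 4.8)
The plan is to establish the descent inequality \eqref{eq:cost_decrease_centr_MPC_terminal_cost} by the standard \enquote{tail-plus-terminal-controller} construction and then to derive convergence from it. Fix $\xsys_k\in\Gamma_\alpha$ and let $\xmpc^*(\cdot;\xsys_k)$, $\umpc^*(\cdot;\xsys_k)$ denote the optimal pair. I would first construct a feasible (in general suboptimal) candidate control for the OCP \eqref{eq:gl_ocp_in_gl_quant} initialized at the successor state $\xmpc^*(\dt;\xsys_k)$, namely the shifted optimal control $\tilde u(\tau):=\umpc^*(\tau+\dt;\xsys_k)$ on $\tau\in[0,T-\dt]$ concatenated with the CLF feedback $\tilde u(\tau):=\kappa_V(\tilde x(\tau))$ on $\tau\in[T-\dt,T]$, where $\tilde x$ is the associated state trajectory. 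On $[0,T-\dt]$ one has $\tilde x(\tau)=\xmpc^*(\tau+\dt;\xsys_k)$, so that $\tilde x(T-\dt)=\xmpc^*(T;\xsys_k)$, which by Lemma~\ref{lem:MPC:xT} lies in $\Omega_\beta$.

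The key structural fact to check is that $\Omega_\beta$ is positively invariant under $\kappa_V$: since $\dot V(x,\kappa_V(x))\le -l(x,\kappa_V(x))\le 0$ on $\Omega_\beta$ by Assumption~\ref{as:clf_on_omega} together with \eqref{eq:bounds_cost}, the value of $V$ is non-increasing along $\kappa_V$-trajectories starting in the sublevel set $\Omega_\beta=\{V\le\beta\}$, hence $\tilde x(\tau)\in\Omega_\beta$ for all $\tau\in[T-\dt,T]$. This guarantees both feasibility ($\tilde u\in U$, using $\kappa_V(x)\in U$ from Assumption~\ref{as:clf_on_omega} and $\umpc^*\in U$) and that the CLF inequality applies along the entire terminal piece. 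Integrating $\dot V(\tilde x,\kappa_V(\tilde x))+l(\tilde x,\kappa_V(\tilde x))\le 0$ over $[T-\dt,T]$ yields $V(\tilde x(T))+\int_{T-\dt}^{T} l(\tilde x,\kappa_V(\tilde x))\dd\tau\le V(\xmpc^*(T;\xsys_k))$. Evaluating the candidate cost, applying the change of variables $s=\tau+\dt$ to the running cost on $[0,T-\dt]$, and using this bound gives $J(\tilde u,\xmpc^*(\dt;\xsys_k))\le J^*(\xsys_k)-\int_0^{\dt} l(\xmpc^*(\tau;\xsys_k),\umpc^*(\tau;\xsys_k))\dd\tau$. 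Since $J^*(\xmpc^*(\dt;\xsys_k))\le J(\tilde u,\xmpc^*(\dt;\xsys_k))$ by optimality, \eqref{eq:cost_decrease_centr_MPC_terminal_cost} follows.

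For the convergence claim I would note that in closed loop the state at the next sampling instant is exactly $\xsys_{k+1}=\xmpc^*(\dt;\xsys_k)$, so \eqref{eq:cost_decrease_centr_MPC_terminal_cost} reads $J^*(\xsys_{k+1})\le J^*(\xsys_k)-\int_0^{\dt} l(\xmpc^*(\tau;\xsys_k),\umpc^*(\tau;\xsys_k))\dd\tau$. Because $l\ge 0$, the sequence $J^*(\xsys_k)$ is non-increasing and bounded below by $0$, which simultaneously shows that $\Gamma_\alpha$ is positively invariant and the scheme stays well defined. Summing over $k$ telescopes to $\sum_{k}\int_0^{\dt} l(\xmpc^*(\tau;\xsys_k),\umpc^*(\tau;\xsys_k))\dd\tau\le J^*(\xsys_0)<\infty$, so the summand tends to zero, and the lower bound $l(x,u)\ge m_l\norm{x}^2$ from \eqref{eq:bounds_cost} gives $\int_0^{\dt}\norm{\xmpc^*(\tau;\xsys_k)}^2\dd\tau\to 0$.

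The final and, in my view, most delicate step is to upgrade this integral decay to $\mylim_{t\to\infty}\norm{\xsys(t)}=0$. Here I would exploit that admissible controls take values in the compact set $U$ and that bounded controls produce bounded states (the standing assumption after \eqref{eq:bounds_cost}), so $F(\xsys,\usys)$ is uniformly bounded along the closed loop and the trajectory is uniformly Lipschitz in $t$. A uniform Lipschitz bound converts $\int_0^{\dt}\norm{\xmpc^*(\tau;\xsys_k)}^2\dd\tau\to 0$ into $\norm{\xsys_k}=\norm{\xmpc^*(0;\xsys_k)}\to 0$ (otherwise a value $\norm{\xsys_k}$ bounded away from zero would force a positive lower bound on the integral), and the same estimate fills the inter-sample gaps so that $\norm{\xsys(t)}\to 0$ for the continuous-time closed loop. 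The main obstacle is therefore not the descent inequality, whose construction is routine, but making this continuity and boundedness argument rigorous enough to pass from sampled convergence to continuous-time convergence.
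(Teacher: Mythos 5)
Your proposal is correct and follows essentially the same route as the paper, which does not spell out the argument but refers to the standard proof in \cite{Graichen:TAC:2010}: the descent inequality via the shifted optimal control concatenated with the CLF feedback $\kappa_V$ on $\Omega_\beta$ (using Lemma~\ref{lem:MPC:xT} for $\xmpc^*(T;\xsys_k)\in\Omega_\beta$ and invariance of the sublevel set), followed by a Barbalat-type argument to pass from the telescoped integral decay to $\mylim_{t\to\infty}\|\xsys(t)\|=0$. Your final continuity/boundedness step is precisely the ``standard arguments in MPC using Barbalat's lemma'' the paper invokes, so there is no substantive difference.
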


Theorem~\ref{theo:exp_stab_centr_MPC} (and the corresponding proof) is
a simplified version of the one given in \cite{Graichen:TAC:2010}. The
stability in the sense of $\mylim_{t\to\infty}\|\xsys(t)\|=0$ follows
from standard arguments in MPC using Barbalat's lemma. Stronger
results such as exponential stability require additional continuity
assumptions that will be employed later on in the paper.

\section{Decomposition and ADMM scheme}
%==========================================================================================%
\label{sec:DMPC}

The dynamically coupled OCP~\eqref{eq:gl_ocp_in_gl_quant} of the
centralized MPC scheme can be separated in terms of the single
subsystems using dual decomposition in connection with an augmented
Lagrangian formulation~\cite{Bertsekas:book:1997}. 
This is achieved by introducing local copies of the coupling variables
along with further coordination variables to allow for a decomposable
form of the augmented Lagrangian and a distributed solution based on ADMM.

\subsection{Decomposition}

The coupled subsystems~\eqref{eq:dyn_subsys} are decomposed by
introducing local copies of the coupling variables
$[\xsys_j]_{j\in\Ngets{i}}\in\Rbb^{p_i}$ that render the subsystem
dynamics independent of its neighbors $j\in\Ngets{i}$, i.e.\
\begin{equation}\label{eq:dyn_subsys_decoupled}
  \dot{\xmpc}_i = f_i \left( \xmpc_i, \umpc_i, v_i \right) \,, \quad
  \xmpc_i(0) = \xsys_{k,i} \,, \quad
  i\in\Vcal \,.
\end{equation}
The local copies are defined as 
\begin{equation}
v_i=[v_{ji}]_{j\in\Ngets{i}}\,, \quad i\in\Vcal
\end{equation}
where $v_{ji}$ represents the local copy of state $\xmpc_j$, $j\in\Ngets{i}$ 
for subsystem $i\in\Vcal$.
In order to use a separable augmented Lagrangian formulation for the
distributed ADMM algorithm, further ``coordination'' variables $z_i$
along with the consistency constraints  
\begin{subequations}
\label{eq:consistencyconstr}
\begin{alignat}{2}
\label{eq:consistencyconstr_1}
0 &= z_i - x_i \,, & \quad & i\in\Vcal
\\ \label{eq:consistencyconstr_2}
0 &= z_j - v_{ji} \,, & \quad & j\in \Ngets{i} \,, \quad i\in\Vcal \,.
\end{alignat}
\end{subequations}
are introduced.\,\footnote{
  Note that this approach differs from the ADMM approach in
  \cite{Farokhi:Springer:2014} that is based on a non-augmented Lagrangian
  formulation without quadratic penalty, for which the local copies
  $v_i$ of the coupling variables are sufficient to separate the
  Lagrangian.  
}

Figure~\ref{fig:couplVars} illustrates the decomposition for the
previous example system in \eqref{eq:DMPC_simple_example} and
Figure~\ref{fig:couplStrucure}. The local copies of the subsystems are
given by 
\begin{equation}
v_1 = v_{21}\,,  \quad 
v_2 = \begin{bmatrix} v_{12} \\ v_{32} \end{bmatrix} , 
\quad 
v_3 =v_{13} 
\end{equation}
and the consistency constraints \eqref{eq:consistencyconstr_1} and
\eqref{eq:consistencyconstr_2} read 
\begin{equation}
\left.\begin{aligned}
0 &= z_1-x_1 \\ 0 &= z_2-x_2 \\ 0 &= z_3-x_3
\end{aligned}\right\}
\quad \text{and} \quad
\left\{\begin{aligned}
0 &= z_1-v_{12} \\ 0 &= z_1-v_{13} \\ 0 &= z_2-v_{21} \\ 0 &= z_3-v_{32} 
\end{aligned}\right.
\end{equation}
accordingly.

\begin{figure}[t]
	\centering
	{
		\normalsize
		\resizebox{0.4\textwidth}{!}{\import{figs/inkscape/}{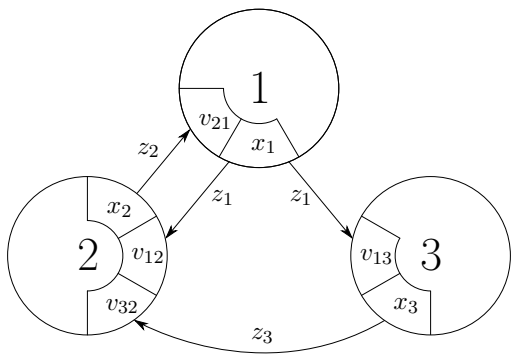}}}
	\caption{Coupling variables for the decomposition of example
          system~\eqref{eq:DMPC_simple_example}.}
	\label{fig:couplVars}
\end{figure}

To ease notations, we introduce the stacking notation and multipliers
\begin{equation}
\label{eq:notation_w_mu}
w = \begin{bmatrix} u_i \\ v_i\end{bmatrix}_{i\in\Vcal}\,, \quad
z = [z_i]_{i\in\Vcal}\,, \quad \mu = [\mu_i]_{i\in\Vcal} 
\quad\text{with}\quad \mu_i
= \begin{bmatrix} 
\hphantom{[}\mu_{ii}\hphantom{]_{j\in\Ngets{i}}} \\ [\mu_{ji}]_{j\in\Ngets{i}}
\end{bmatrix},
\end{equation}
and define the augmented Lagrangian 
\begin{multline}
\label{eq:augmLagr}
J_\rho(w,z,\mu;\xsys_k) := J(\umpc;\xsys_k) +
%\sum_{i\in\Vcal}V_i(\xmpc(T)) + 
\sum_{i\in\Vcal} \int_0^T 
%l_i(\xmpc,\umpc) + 
\mu_{ii}^\TT (z_i-\xmpc_i) + 
  \frac{\rho}{2} ||z_i-\xmpc_i||^2
\\
+ \sum_{j\in\Ngets{i}} \left(\mu_{ji}^\TT(z_j-v_{ji}) + \frac{\rho}{2} ||z_j-v_{ji}||^2\right)
\dd\tau
\end{multline}
subject to the dynamics~\eqref{eq:dyn_subsys_decoupled}.
The consistency constraints~\eqref{eq:consistencyconstr} are adjoined
to \eqref{eq:augmLagr} by means of the multipliers defined in
\eqref{eq:notation_w_mu}, where
$\mu_{ii}$ and $\mu_{ij}$ are associated with
\eqref{eq:consistencyconstr_1} and \eqref{eq:consistencyconstr_2}, respectively. 
The consistency constraints are additionally penalized in
\eqref{eq:augmLagr} with parameter $\rho>0$, as typically done in
augmented Lagrangian 
formulations. 

Provided that strong duality holds, the optimal pairing
$(w^*,z^*,\mu^*)=(w^*,z^*,\mu^*)(\tau;\xsys_k)$ satisfies the saddle point condition
\begin{equation}
\label{eq:saddlepointcond}
J_\rho(w^*,z^*,\mu;\xsys_k) \le 
J_\rho(w^*,z^*,\mu^*;\xsys_k) \le 
J_\rho(w,z,\mu^*;\xsys_k) \,,
\end{equation}
which implies that the optimal value $J_\rho(w^*,z^*,\mu^*;\xsys_k)$ 
is minimized w.r.t.\ the primal variables $(w,z)$ and maximized in
terms of the multipliers $\mu$ respresenting the dual variables. 
Instead of solving OCP~\eqref{eq:gl_ocp_in_gl_quant} directly, we can
employ an ADMM algorithm where each iteration $q$ is given by%
\begin{subequations}
  \label{eq:ADMM_centralized}
\begin{alignat}{2}
  \label{eq:ADMM_centralized_1}
  w^{q} &= \myarg \mymin_{w\in\Wcal} J_\rho(w,z^{q-1},\mu^{q-1};\xsys_k)
  \\   \label{eq:ADMM_centralized_2}
  z^{q} &= \myarg \mymin_{z} J_\rho(w^{q},z,\mu^{q-1};\xsys_k)
  \\   \label{eq:ADMM_centralized_3}
  \mu^{q} &= \mu^{q-1} + \rho \begin{bmatrix}
    \hphantom{[} z_i^{q}-x_i^{q} \hphantom{]_{j\in\Ngets{i}}} \\
    \left[z_j^{q}-v_{ji}^{q}\right]_{j\in\Ngets{i}}
  \end{bmatrix}_{i\in\Vcal}
\end{alignat}
\end{subequations}
with $\Wcal=\{(u,v) \in L_{m+p}^\infty(0,T)\,|\, u(t)\in U~\forall t\in[0,T]\}$,
consisting of the minimization steps \eqref{eq:ADMM_centralized_1}, 
\eqref{eq:ADMM_centralized_2} and the steepest ascent update
\eqref{eq:ADMM_centralized_3} for the multipliers. Note that the
minimization problem \eqref{eq:ADMM_centralized_1} uses the original
control variables as well as the local copies as optimization
variables.  

An assumption and discussion on the convergence of the ADMM algorithm
can be found in Section~\ref{sec:stability_results_dmpc}. The next
subsection exploits the separability of the augmented Lagrangian
\eqref{eq:augmLagr} to derive a distributed MPC algorithm for the
dynamically coupled system~\eqref{eq:dyn_subsys}. In particular, a
suitable stopping criterion for the ADMM
iterates~\eqref{eq:ADMM_centralized} is necessary, which will be in
the focus of the remainder of the paper.

\subsection{Distributed solution by ADMM}
%------------------------------------------------------------------------------------------%
\label{sec:ADMM_distr}

The augmented Lagrangian \eqref{eq:augmLagr} can be separated in the
single subsystems $i\in\Vcal$, which allows to solve the ADMM
iterations in a distributed manner. Moreover, the minimization
\eqref{eq:ADMM_centralized_2} can be solved analytically,
as $z$ is not involved in the system dynamics~\eqref{eq:dyn_subsys_decoupled}.
The following lines summarize the distributed ADMM algorithm for
each local MPC agent associated to subsystem $i\in\Vcal$ that is
executed in each MPC step with the current system state
$\xsys_k=[\xsys_{k,i}]_{i\in\Vcal}=\xsys(t_k)$. The dependency of the
trajectories 
on $\tau\in[0,T]$ and $\xsys_k$ 
is omitted in the
algorithm for the sake of readability.\newline
\vspace{2pt}
\hrule
\textbf{1) Local initialization}
  \begin{itemize}
  \item Choose $\rho>0$ (penalty parameter) and $d>0$ (stopping criterion).
  \item Initialize $(z_i^0,\mu_i^0)$ and set $q=1$.
    \item Receive $z_j^0$ from neighbors $j\in\Ngets{i}$ and 
      $\mu_{ij}^0$ from neighbors $j\in\Nto{i}$.
  \end{itemize}

\textbf{2) Local minimization}

  \begin{itemize}
    \item Compute 
      $(w_i^q,\xmpc_i^q)$ with $w_i=[\umpc_i^\TT,v_i^\TT]^\TT$ by solving
      \begin{subequations}
        \label{eq:cost:MPClocal}
      \begin{alignat}{3} \nonumber
        & \!\mymin_{w_i}\quad  && V_i(\xmpc(T))+\int_0^T 
        l_i(\xmpc_i,\umpc_i) + (\mu_i^{q-1})^\TT(z_i^{q-1}-\xmpc_i) +
        \frac{\rho}{2}||z_i^{q-1}-\xmpc_i||^2
        \\ \label{eq:cost:MPClocal:cost}
        &&& \qquad + \sum_{j\in\Ngets{i}}\left( (\mu_{ji}^{q-1})^\TT(z_j^{q-1}\!-\!v_{ji})+
        \frac{\rho}{2}||z_j^{q-1}\!-\!v_{ji}||^2\right)\!\dd\tau
      \\ \label{eq:cost:MPClocal:dyn}
      & \,\text{s.t.} && \dot{\xmpc}_i = f_i(\xmpc_i,\umpc_i,v_i)\,, \quad
      \xmpc_i(0) = \xsys_{k,i} 
      \\ &&&
      u_i(\tau) \in U_i\,, \quad \tau\in[0,T] \,.
    \end{alignat}
    \end{subequations}
  \item Receive $v_{ij}^q$ from neighbors $j\in\Nto{i}$.
  \item Compute $z_i^q$ with
    \begin{equation}
      \label{eq:ADMM:update_z}
      z_i^q = \frac{1}{|\Nto{i}|+1} \left( 
        \xmpc_i^q + \sum_{j\in\Nto{i}} v_{ij}^q - \frac{1}{\rho} 
        \sum_{j\in\Nto{i}\cup\{i\}} \mu_{ij}^{q-1}
      \right).
    \end{equation}
  \item Receive $z_j^q$ from neighbors $j\in\Ngets{i}$.
  \end{itemize}
\textbf{3) Local multiplier update}
    \begin{itemize}
    \item Compute $\mu_i^q$ with
      \begin{equation}
        \label{eq:ADMM:update_mu}
        \mu_i^q = \mu_i^{q-1} + \rho \begin{bmatrix} 
          \hphantom{[} z_i^q-\xmpc_i^q \hphantom{]_{j\in\Nto{i}}}\\
          \left[z_j^q - v_{ji}^q \right]_{j\in\Ngets{i}}\end{bmatrix}\,.
      \end{equation}

    \item Compute local copies of $\mu_{ij}^q$ with
      \begin{equation}
        \label{eq:ADMM:update_mu_copy}
        \mu_{ij}^q = \mu_{ij}^{q-1} + \rho (z_i^q-v_{ij}^q) \,,
        \quad j\in\Nto{i}\,.
      \end{equation}
    \end{itemize}
\textbf{4) Stopping criterion}
      \begin{itemize}
      \item Quit, if 
        \begin{equation}
          \label{eq:ADMM_convcrit}
          \begin{Vmatrix} 
            z_i^q - z_i^{q-1} \\ 
            \mu_i^{q} - \mu_i^{q-1} 
          \end{Vmatrix}_{L^\infty} 
          \!\!\le d\,||\xsys_{k,i}||
        \end{equation}
        is satisfied in iteration $q_k:=q$ and for all $i\in\Vcal$.
      \item Otherwise, increment $q$ and return to 2).
      \end{itemize}
%\vspace{1ex}\hrule\vspace{1ex}

The minimization problems~\eqref{eq:cost:MPClocal} are local MPC
problems directly following from the splitting of the augmented
Lagrangian \eqref{eq:augmLagr} subject to the decoupled
dynamics~\eqref{eq:dyn_subsys_decoupled}. The explicit computations of
the local variables $z_i^q$ according to \eqref{eq:ADMM:update_z}
directly follows from the analytic solution of the minimization
problem \eqref{eq:ADMM_centralized_2}, as mentioned before.
The multiplier update in Step 3) computes $\mu_i^q$ 
as defined in~\eqref{eq:notation_w_mu}. Moreover, the additional step 
\eqref{eq:ADMM:update_mu_copy} generates local copies of the
multipliers $\mu_{ij}^q$ associated with the neighboring MPC agents $j\in\Nto{i}$
in order to avoid an additional communication step. 

An important part of the algorithm concerns the stopping criterion in
Step~4) that terminates the ADMM algorithm at a certain iteration number
$q_k$ in the current MPC step $k$. The criterion \eqref{eq:ADMM_convcrit} 
evaluates the progress of $(z_i^q,\mu_i^q)$ between
two iterations. In particular, the norm of $\mu_i^{q}-\mu_i^{q-1}$ is a
direct measure for the residual of the consistency
constraints~\eqref{eq:consistencyconstr}, which becomes obvious in
view of the multiplier update~\eqref{eq:ADMM:update_mu}. The
stopping criterion is formulated in terms of the norm of the current local
system state $\xsys_{k,i}$ with the constant $d>0$. This leads to a
contraction of the stopping criterion during the stabilization of the
overall system to the origin.
The stopping criterion \eqref{eq:ADMM_convcrit} is locally evaluated,
but has to be satisfied by all MPC agents $i\in\Vcal$. 
This requires one global communication step, either by broadcasting
the convergence flag of each MPC agent to the whole network
or alternatively by defining a master agent that collects all
convergence flags and in turn communicates the termination of the
algorithm to all agents if \eqref{eq:ADMM_convcrit} is satisfied.
Note that the evaluation of the stopping criterion involves the only global
communication step of the algorithm, as the updates during one ADMM
iteration only require neighbor-to-neighbor communication.\,\footnote{
  As it will
  turn out in Section~\ref{sec:relaxed_assumptions}, an alternative to
  the stopping criterion in Step 4) is to use a sufficiently large
  number of iterations without explicitely evaluating
  \eqref{eq:ADMM_convcrit}. 
}

In the first MPC step $k=0$, the ADMM algorithm is locally initialized
in Step 1) with appropriate trajectories $(z_i^0,\mu_i^0)$.
The subsequent MPC steps $k\ge 1$ rely on the solutions from the
previous run, i.e.\  $(z_i^0,\mu_i^0)=(z_i^0,\mu_i^0)(\tau;\xsys_k)$ are
re-initialized in a warm-start manner by
\begin{align} 
\label{eq:multipl_initialization}
  \mu^0(\tau; \xsys_{k+1}) = \mu^{q_k}(\tau; \xsys_k)\,, \quad
  z^0(\tau;\xsys_{k+1}) = z^{q_k}(\tau; \xsys_k) \,, \quad
  \tau\in[0,T]\,.
\end{align}
When the stopping criterion \eqref{eq:ADMM_convcrit} is satisfied, 
the control trajectories $\umpc^{q_k}=\umpc^{q_k}(\tau;\xsys_k)$ from
the last iteration $q_k$ are 
used as local 
controls for the actual subsystems~\eqref{eq:dyn_subsys}, i.e.\
\begin{equation}
\usys_i(t_k+\tau) = \umpc_i^{q_k}(\tau;\xsys_k) \,, \quad
\tau\in[0,\dt)\,, \quad i\in\Vcal\,.
\end{equation}
Similar to \eqref{eq:optcontrlaw_centralized}, the principle of
optimality allows one to express the computed controls 
$\umpc^{q_k}=[\umpc_i^{q_k}]_{i\in\Vcal}$ as a nonlinear control law of the form
\begin{equation}
\label{eq:optcontrlaw_distributed}
\umpc^{q_k}(\tau;\xsys_k) =: \kappa(\xmpc^{q_k}(\tau;\xsys_k);z^{q_k-1},\mu^{q_k-1},\xsys_k) 
\end{equation}
with $\xmpc^{q_k}=[\xmpc_i^{q_k}]_{i\in\Vcal}$.
Note, however, that in contrast to the centralized MPC case
\eqref{eq:optcontrlaw_centralized}, $\kappa$ is 
parametrized by the trajectories of the previous iteration 
$z^{q_k-1}(\cdot;\xsys_k)$ and $\mu^{q_k-1}(\cdot;\xsys_k)$.

\section{Distributed MPC stability} \label{sec:stability_results_dmpc}
%==========================================================================================%

The distributed MPC scheme presented in the last section is
suboptimal in the sense that the stopping criterion leads to a 
premature stop of the iterations. As a consequence, the
ADMM-based solution is not consistent with the centralized
MPC solution in Section~\ref{sec:MPC_centralized} and
requires to take a closer look at the distributed MPC scheme and the
effects of the stopping criterion.
This section therefore derives stability results for the DMPC scheme
under linear convergence assumptions on the ADMM algorithm that are
subsequently relaxed in a second step.

\subsection{Preliminaries}
\label{sec:DMPC_preliminaries}

The stopping criterion~\eqref{eq:ADMM_convcrit} has the advantage to limit
the ADMM iterations and the corresponding communication effort
in each MPC step. On the downside, the premature exit implies that the
computed trajectories $\umpc^{q_k}(\cdot;\xsys_k)$ do not correspond to
the optimal ones $\umpc^*(\cdot;\xsys_k)$ from the centralized
OCP~\eqref{eq:gl_ocp_in_gl_quant}. 
In particular, the consistency
constraints~\eqref{eq:consistencyconstr} are not exactly satisfied,
leading to different state trajectories of the local MPC
predictions, the actual realizations and the optimal ones of the
centralized MPC solution.
In detail, we have to distinguish between the following state
trajectories:
\begin{itemize}
\item the \emph{individual} predicted state trajectories
  $\xmpc^{q_k}(\cdot;\xsys_k)=[\xmpc_i^{q_k}(\cdot;\xsys_{k,i})]_{i\in\Vcal}$ as part of the
  solution of the local OCPs~\eqref{eq:cost:MPClocal}, i.e.\
  \begin{equation}
    \label{eq:traj_ind}
    \dot\xmpc^{q_k}(\tau;\xsys_k) = f(\xmpc^{q_k}(\tau;\xsys_k), 
    \umpc^{q_k}(\tau;\xsys_k), v^{q_k}(\tau;\xsys_k)) \,, \quad 
    \xmpc^{q_k}(0;\xsys_k) = \xsys_k \,,
  \end{equation}
  where $f = [f_i]_{i\in\Vcal}$ and $v = [v_i]_{i\in\Vcal}$
  stack the dynamics functions $f_i$ and local copies $v_i$ in
  \eqref{eq:cost:MPClocal:dyn},
  
\item the \emph{actual} predicted state trajectory 
  $\xmpc_F^{q_k}(\cdot;\xsys_k)$ 
  following from applying the controls $\umpc^{q_k}(\cdot;\xsys_k)$ to the
  system~\eqref{eq:dyn_sys}, i.e.\
  \begin{equation}
    \label{eq:traj_act}
    \dot\xmpc_F^{q_k}(\tau;\xsys_k) =
    F(\xmpc_F^{q_k}(\tau;\xsys_k),
    \umpc^{q_k}(\tau;\xsys_k)) \,, \quad \xmpc_F^{q_k}(0;\xsys_k) =
    \xsys_k \,.
  \end{equation}

\item the \emph{optimal} predicted state trajectory 
  $\xmpc^*(\cdot;\xsys_k)=[\xmpc_i^*(\cdot;\xsys_{k,i})]_{i\in\Vcal}$
  following from solving the centralized OCP~\eqref{eq:gl_ocp_in_gl_quant}, i.e.\
  \begin{equation}
    \label{eq:traj_opt}
    \dot\xmpc^*(\tau;\xsys_k) = 
    F(\xmpc^*(\tau;\xsys_k),\umpc^*(\tau;\xsys_k)) \,, \quad \xmpc^*(0;\xsys_k) =
    \xsys_k \,.
  \end{equation}
\end{itemize}
In general, the single trajectories will differ from each other due to
the premature stopping criterion~\eqref{eq:ADMM_convcrit}.
In the receding horizon fashion of MPC this implies that the 
system state $\xsys_{k+1}=\xsys(t_{k+1})$ at the next sampling instant
will lie on the actual state trajectory $\xsys_{k+1}=x_F^{q_k}(\dt;\xsys_k)$
and not on the individual trajectory $\xmpc^{q_k}(\dt;\xsys_k)$ or the
optimal one $\xmpc^*(\dt;\xsys_k)$. This discrepancy is captured in
the error between the actual and optimal trajectory
\begin{equation}
  \label{eq:actualerror}
  \Delta x_F^{q_k}(\tau;\xsys_k) := \xmpc_F^{q_k}(\tau;\xsys_k) -
  \xmpc^*(\tau;\xsys_k) \,, \quad \tau\in[0,T]
\end{equation} 
that can be interpreted as the optimization error or suboptimality
measure in each MPC step $k$. The error \eqref{eq:actualerror} will be
of importance for the stability analysis in
Section~\ref{sec:stability_subsection}. To this end, several
assumptions are necessary to proceed.

\begin{assumption} \label{ass:lipschitz_DMPC}
The optimal cost \eqref{eq:Jopt} is twice continuously
differentiable and the optimal feedback laws
\eqref{eq:optcontrlaw_centralized} and
\eqref{eq:optcontrlaw_distributed} 
are locally Lipschitz in their arguments.   
\end{assumption}

The continuity properties in Assumption~\ref{ass:lipschitz_DMPC} are
necessary to derive certain bounds during the stability analysis.
Another assumption concerns the existence of optimal multipliers for
the augmented Lagrangian formulation~\eqref{eq:augmLagr}.

\begin{assumption} \label{as:unique_bounded_multipl}
  There exist unique bounded multipliers $\mu^*(\tau;\xsys_k)$,
  $\tau\in[0,T]$ satisfying the saddle point condition \eqref{eq:saddlepointcond}.
  Moreover, $\mu^*(\cdot;\xsys_k)$ and $z^*(\cdot;\xsys_k)$ are
  locally Lipschitz for all $\xsys_k\in\Gamma_\alpha$. 
\end{assumption}

For the ADMM algorithm~\eqref{eq:ADMM_centralized} and its distributed
solution in Section~\ref{sec:ADMM_distr}, we have to assume
boundedness as well as convergence for the iterated solutions. To this
end, denote $\Mcal_\alpha:=\{\mu^*(\cdot;\xsys_k) \,|\,\xsys_k\in\Gamma_\alpha\}$ and
$\Zcal_\alpha:=\{z^*(\cdot;\xsys_k) \,|\,\xsys_k\in\Gamma_\alpha\}$
the compact and non-empty sets of optimal multipliers $\mu^*$ and
variables $z^*$ %=v^*=\left[ [x_j^*]_{j\in\Nto{i}}\right]_{i\in\Vcal}$
and let $\Mcal_\alpha^{\sigma_\mu}$ and $\Zcal_\alpha^{\sigma_z}$ be
corresponding $\sigma$-neighborhoods with $\sigma_\mu,\sigma_z>0$. 

\begin{assumption}
  \label{as:ADMM_boundedness}
  The ADMM algorithm \eqref{eq:ADMM_centralized} generates bounded
  trajectories in the sense that $\exists\sigma_\mu,\sigma_z>0$ such
  that $\mu^q(\cdot;\xsys_k) \in \Mcal_\alpha^{\sigma_\mu}$, 
  $z^q(\cdot;\xsys_k) \in \Zcal_\alpha^{\sigma_z}$ 
  for a given initialization ($q=0$) and the subsequent iterations
  ($q=1,2,\ldots$).
\end{assumption}

\begin{assumption}
  \label{as:linear_convergence}
  The ADMM iterates are linearly convergent in the sense
  \begin{equation}
    \label{eq:linear_convergence}
    \begin{Vmatrix}
      z^{q}(\cdot;\xsys_k) - z^*(\cdot;\xsys_k) \\ 
      \mu^{q}(\cdot;\xsys_k) - \mu^*(\cdot;\xsys_k)  
    \end{Vmatrix}_{L^\infty}
    \!\! \le C
     \begin{Vmatrix}
       z^{q-1}(\cdot;\xsys_k) - z^*(\cdot;\xsys_k) \\ 
       \mu^{q-1}(\cdot;\xsys_k) - \mu^*(\cdot;\xsys_k) 
     \end{Vmatrix}_{L^\infty}
  \end{equation}
  for some $C\in[0,1)$.
\end{assumption}

The linear convergence assumption is certainly the strongest
assumption for proving stability of the suboptimal ADMM-based DMPC
scheme. The investigation of 
convergence for ADMM algorithms has been in the focus of research over
the last years, mostly assuming finite-dimensional and convex
optimization problems. For instance, linear ADMM convergence rates either 
for primal and/or dual variables are shown
in~\cite{Han:SIAM:2013,Shi:TSP:2014,Deng2016,Hong:MP:2017}. 
For ADMM in an infinite-dimensional setting but with convexity
assumptions, 
\cite{Gabay:1983,Davis2015,Giselsson2015,Davis:MOR:2017,Giselsson:TAC:2017} prove
convergence by applying the Peaceman-Rachford or 
Douglas-Rachford method to the dual problem, which is equivalent to
using ADMM in the primal case.
However, non-convex problems or nonlinear couplings have only
been considered sporadically or for specific
problems~\cite{Attouch:MOR:2010,Benning:2016,Magnusson2016,Wang2016}.
In particular and to the best knowledge of the authors, there exist no
convergence results in the case of ADMM for state-coupled nonlinear
OCPs as considered in this paper, though the above-mentioned
references may lead to similar convergence results in the future.
Moreover, a relaxation of Assumption~\ref{as:linear_convergence} will
be considered in Section~\ref{sec:relaxed_assumptions}.

\subsection{Stability results}
\label{sec:stability_subsection}

The investigation of stability requires to look at the
error $\Delta x_F^{q_k}(\cdot;\xsys_k)$ defined in~\eqref{eq:actualerror}
between the suboptimal ADMM-based DMPC scheme and the centralized,
optimal MPC solution. The following lemma gives an 
intermediate result that bounds the error $\Delta x_F^{q_k}(\cdot;\xsys_k)$
in terms of the stopping criterion~\eqref{eq:ADMM_convcrit} for all
MPC agents $i\in\Vcal$.

\begin{lemma} \label{lem:err_to_stop_crit}
  Suppose that Assumptions
  \ref{as:clf_on_omega}-\ref{as:linear_convergence} hold.
  Then, there exists a constant $D>0$ such that the error
  \eqref{eq:actualerror} satisfies
  \begin{equation}
    \label{eq:err_to_stop_crit}
    \|\Delta\xmpc_F^{q_k}(\cdot;\xsys_k)\|_{L^\infty} \le D d \|\xsys_k\|
    \quad \forall\xsys_k\in\Gamma_\alpha\,.
  \end{equation}
\end{lemma}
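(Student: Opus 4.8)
The plan is to chain four Lipschitz/Gronwall-type estimates that convert the stopping criterion~\eqref{eq:ADMM_convcrit} into a bound on the actual-versus-optimal trajectory error. First I would compare the actual trajectory $\xmpc_F^{q_k}$ in~\eqref{eq:traj_act} with the optimal trajectory $\xmpc^*$ in~\eqref{eq:traj_opt}: both obey $\dot{\xmpc}=F(\xmpc,\cdot)$ with the \emph{same} initial value $\xsys_k$ and differ only in the applied controls $\umpc^{q_k}$ versus $\umpc^*$. Since $F$ is continuously differentiable and, for $\xsys_k\in\Gamma_\alpha$, the trajectories remain in a compact set on which $F$ is Lipschitz, a standard Gronwall argument over $[0,T]$ yields
\[
  \norm{\Delta\xmpc_F^{q_k}(\cdot;\xsys_k)}_{L^\infty}
  \le L_x\, \norm{\umpc^{q_k}(\cdot;\xsys_k)-\umpc^*(\cdot;\xsys_k)}_{L^\infty}
\]
with $L_x$ depending on the Lipschitz constant of $F$ and on $T$. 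This reduces the lemma to bounding the control error by $d\,\norm{\xsys_k}$.

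For the control error I would use the feedback representations~\eqref{eq:optcontrlaw_centralized} and~\eqref{eq:optcontrlaw_distributed} together with the observation that at the optimum the consistency constraints~\eqref{eq:consistencyconstr} hold, so that $z^*=\xmpc^*$, the local copies satisfy $v_{ji}^*=z_j^*$, and the distributed law reduces to the centralized one, $\umpc^*=\kappa(\xmpc^*;z^*,\mu^*,\xsys_k)$. Hence $\umpc^{q_k}-\umpc^*$ is a difference of the Lipschitz map $\kappa$ evaluated at $(\xmpc^{q_k},z^{q_k-1},\mu^{q_k-1})$ and at $(\xmpc^*,z^*,\mu^*)$. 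By Assumption~\ref{ass:lipschitz_DMPC}, and with Assumption~\ref{as:ADMM_boundedness} keeping all iterates in the compact neighborhoods $\Mcal_\alpha^{\sigma_\mu},\Zcal_\alpha^{\sigma_z}$ where a uniform Lipschitz constant is available, this is controlled by $\norm{z^{q_k-1}-z^*}_{L^\infty}+\norm{\mu^{q_k-1}-\mu^*}_{L^\infty}$ plus an additional $\norm{\xmpc^{q_k}-\xmpc^*}_{L^\infty}$ term. The latter, involving the individual trajectory~\eqref{eq:traj_ind}, is itself bounded via Gronwall by the control error and the copy error $v^{q_k}-v^*$, both of which again trace back to the parameter distances; collecting these terms (an absorption/small-gain step) gives
\[
  \norm{\umpc^{q_k}(\cdot;\xsys_k)-\umpc^*(\cdot;\xsys_k)}_{L^\infty}
  \le L_w \begin{Vmatrix} z^{q_k-1}(\cdot;\xsys_k)-z^*(\cdot;\xsys_k)\\ \mu^{q_k-1}(\cdot;\xsys_k)-\mu^*(\cdot;\xsys_k)\end{Vmatrix}_{L^\infty}.
\]

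The final link bounds the distance-to-optimum at iteration $q_k-1$ by the ADMM step size, and the step size by the stopping threshold. Writing $e_q$ and $s_q$ for the $L^\infty$ norms of $(z^q-z^*,\mu^q-\mu^*)$ and of $(z^q-z^{q-1},\mu^q-\mu^{q-1})$, the triangle inequality together with the linear convergence of Assumption~\ref{as:linear_convergence} gives $s_q\ge e_{q-1}-e_q\ge(1-C)\,e_{q-1}$, hence $e_{q_k-1}\le s_{q_k}/(1-C)$. Summing the per-agent criterion~\eqref{eq:ADMM_convcrit} over $i\in\Vcal$ and using $\sum_{i\in\Vcal}\norm{\xsys_{k,i}}^2=\norm{\xsys_k}^2$ yields $s_{q_k}\le d\,\norm{\xsys_k}$, so $e_{q_k-1}\le d\,\norm{\xsys_k}/(1-C)$. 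Chaining the three estimates produces~\eqref{eq:err_to_stop_crit} with $D=L_xL_w/(1-C)$.

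The hard part will be the second step: making rigorous that the computed control, and with it the individual trajectory, depends Lipschitz-continuously on the ADMM parameters $(z^{q_k-1},\mu^{q_k-1})$ and that this dependence is anchored at the centralized optimum. The self-referential appearance of $\xmpc^{q_k}$ inside $\kappa$ and the coupling through the local copies $v^{q_k}$ force the Gronwall/absorption argument sketched above, and one must check that Assumptions~\ref{ass:lipschitz_DMPC}--\ref{as:ADMM_boundedness} indeed supply \emph{uniform} constants over $\Gamma_\alpha$ rather than merely pointwise ones.
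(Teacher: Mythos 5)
Your architecture is sound and, despite the different packaging, it is essentially the paper's proof: the paper likewise reduces everything to Gronwall estimates on the dynamics, the Lipschitz property of the feedback laws anchored via the relation $\kappa^*(\cdot;\xsys_k)=\kappa(\cdot;z^*,\mu^*,\xsys_k)$, the inequality $e_{q_k-1}\le s_{q_k}/(1-C)$ from Assumption~\ref{as:linear_convergence} (this is exactly \eqref{eq:delta_linconv}), and the stopping criterion. The structural difference is mostly cosmetic: the paper splits $\Delta\xmpc_F^{q_k}=(\xmpc_F^{q_k}-\xmpc^{q_k})+(\xmpc^{q_k}-\xmpc^*)$ by Minkowski's inequality and runs two separate Gronwall arguments, while you compare $\xmpc_F^{q_k}$ with $\xmpc^*$ directly and push the individual trajectory $\xmpc^{q_k}$ into the feedback-law step --- where it reappears anyway, so both routes must control the same three trajectories. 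Your aggregation of the per-agent criterion (using $\sum_{i\in\Vcal}\|\xsys_{k,i}\|^2=\|\xsys_k\|^2$ to get $s_{q_k}\le d\,\|\xsys_k\|$) is in fact slightly sharper than the paper's estimates, which carry an extra factor $N$.

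The genuine gap is the treatment of the local copies. You assert that the copy error $v^{q_k}-v^*$ ``traces back to the parameter distances,'' but none of the stated assumptions delivers this directly: Assumption~\ref{ass:lipschitz_DMPC} makes only the control map $\kappa$ Lipschitz in its arguments; it says nothing about how the minimizing copies $v^{q_k}$ of the local problems \eqref{eq:cost:MPClocal} depend on $(z^{q_k-1},\mu^{q_k-1})$. The missing ingredient is the multiplier update \eqref{eq:ADMM:update_mu}: it states that $\mu^{q_k}-\mu^{q_k-1}$ equals $\rho$ times the stacked consistency residuals $z_i^{q_k}-\xmpc_i^{q_k}$ and $z_j^{q_k}-v_{ji}^{q_k}$, so the stopping criterion \eqref{eq:ADMM_convcrit} (or, inside your chain, the triangle inequality plus Assumption~\ref{as:linear_convergence}) bounds $\|v^{q_k}-\hat\xmpc^{q_k}\|$ by a constant times $d\,\|\xsys_k\|$; then $\|v^{q_k}-\hat\xmpc^*\|\le\|v^{q_k}-\hat\xmpc^{q_k}\|+\sqrt{p}\,\|\xmpc^{q_k}-\xmpc^*\|$, whose second term is absorbed by Gronwall exactly as you sketch. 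This is precisely what the paper does in \eqref{eq:bound_err_ind_3} and \eqref{eq:bound_err_act_7}. Without invoking \eqref{eq:ADMM:update_mu} there is no route from the assumptions to any bound on the copies, since they are otherwise unconstrained outputs of the local minimization; once it is inserted, your absorption argument closes and the lemma follows with a constant of the form you give.
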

\begin{proof}
  See Appendix~\ref{app:sec:proof_err_to_stop_crit}.
\end{proof}

In the optimal, centralized MPC case, the system state in the next
sampling step $t_{k+1}=t_k+\Delta t$ lies on the optimal state
trajectory~\eqref{eq:traj_opt} and the optimal cost
$J^*(\xmpc^*(\Delta t;\xsys_k))$ decreases according 
to~\eqref{eq:cost_decrease_centr_MPC_terminal_cost}.  
This is not the case for the DMPC scheme, since
the error $\Delta x_F^{q_k}(\cdot;\xsys_k)$ has a direct influence on the
behavior of the optimal cost from one step to the next. In particular,
the next system state is given by 
\begin{equation}
\label{eq:actual_state_dt}
x_F^{q_k}(\Delta t;\xsys_k) = \xmpc^*(\Delta t;\xsys_k) + 
\Delta x_F^{q_k}(\cdot;\xsys_k) \,.
\end{equation} 
The next lemma bridges the gap between the optimal cost decay given 
in \eqref{eq:cost_decrease_centr_MPC_terminal_cost} and 
the distributed, suboptimal ADMM solution.

\begin{lemma} \label{lem:cost_subopt_sol}
  Suppose that Assumption~\ref{ass:lipschitz_DMPC} holds. 
  Then, there exists a set $\Gamma_{\hat\alpha}
  \subset\Gamma_\alpha$ with $\hat\alpha=\hat\alpha(d,\dt)$
  such that $\xmpc_F^{q_k}(\dt;\xsys_k)\in\Gamma_\alpha$ for all
  $\xsys_k\in\Gamma_{\hat\alpha}$.  
  Moreover, there exist constants $ 0 < a \leq 1 $ and $ b,c > 0 $ 
  such that $\forall\xsys_k \in \Gamma_{\hat\alpha}$, 
  \begin{equation}\label{eq:cost_subopt_sol}
    J^*(\xmpc_F^{q_k}(\dt;\xsys_k)) \le 
    (1-a) J^*(\xsys_k) + b \sqrt{J^*(\xsys_k)}
    \|\Delta\xmpc_F^{q_k}(\cdot;\xsys_k)\|_{L^\infty}
    + c \|\Delta\xmpc_F^{q_k}(\cdot;\xsys_k)\|_{L^\infty}^2 \,.
  \end{equation}
\end{lemma}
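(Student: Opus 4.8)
The plan is to start from the optimal cost decay for the centralized MPC scheme in Theorem~\ref{theo:exp_stab_centr_MPC}, which holds at the point $\xmpc^*(\dt;\xsys_k)$, and then transport this bound to the actual endpoint $\xmpc_F^{q_k}(\dt;\xsys_k)$ using the decomposition \eqref{eq:actual_state_dt}. Writing $x^* := \xmpc^*(\dt;\xsys_k)$ and $\Delta := \Delta x_F^{q_k}(\cdot;\xsys_k)$, the idea is to expand $J^*$ about $x^*$ via a Taylor expansion, which is justified because Assumption~\ref{ass:lipschitz_DMPC} guarantees that $J^*$ is twice continuously differentiable. Concretely,
\begin{equation*}
J^*(x^*+\Delta) = J^*(x^*) + \nabla J^*(x^*)^\TT \Delta + \tfrac12 \Delta^\TT \nabla^2 J^*(\xi) \Delta
\end{equation*}
for some intermediate point $\xi$ on the segment between $x^*$ and $x^*+\Delta$. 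The quadratic remainder term is bounded by $\tfrac12 \|\nabla^2 J^*(\xi)\| \|\Delta\|^2$, which, using boundedness of the Hessian over the relevant compact set, yields the $c\|\Delta\|^2$ term in \eqref{eq:cost_subopt_sol}.

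Next I would control the linear term $\nabla J^*(x^*)^\TT \Delta$. The key observation is that the bounds \eqref{eq:bounds_cost} on $l$ and $V$ propagate to $J^*$, giving an upper bound $J^*(x) \le c_1 \|x\|^2$ on $\Gamma_\alpha$ and, together with the twice-differentiability, a gradient bound of the form $\|\nabla J^*(x^*)\| \le c_2 \|x^*\|$ (since $\nabla J^*(0)=0$ at the origin, being the minimum of a nonnegative cost vanishing there). This allows estimating
\begin{equation*}
\nabla J^*(x^*)^\TT \Delta \le \|\nabla J^*(x^*)\|\,\|\Delta\| \le c_2 \|x^*\|\,\|\Delta\|\,.
\end{equation*}
I would then relate $\|x^*\|$ back to $\sqrt{J^*(\xsys_k)}$: using the lower bound $J^*(x) \ge m_l \|x\|^2$ (inherited from \eqref{eq:bounds_cost}) and the fact that $J^*(x^*) \le J^*(\xsys_k)$ along the optimal trajectory by the cost-decay property, one obtains $\|x^*\| \le C \sqrt{J^*(\xsys_k)}$, producing precisely the cross term $b\sqrt{J^*(\xsys_k)}\|\Delta\|$.

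Finally, the $(1-a)J^*(\xsys_k)$ leading term comes from the strict cost decrease \eqref{eq:cost_decrease_centr_MPC_terminal_cost}: the integral $\int_0^{\dt} l\,\dd\tau$ that is subtracted there can be lower-bounded via $l \ge m_l(\|x\|^2+\|u\|^2) \ge m_l\|x\|^2$ and then related to a fixed fraction $a\, J^*(\xsys_k)$ of the optimal cost, exploiting again the quadratic upper bound on $J^*$; this is the standard exponential-decay argument and gives $J^*(x^*) \le (1-a)J^*(\xsys_k)$ with $a\in(0,1]$. Collecting the three contributions yields \eqref{eq:cost_subopt_sol}. For the set-invariance claim, I would note that by Lemma~\ref{lem:err_to_stop_crit} the error satisfies $\|\Delta\| \le Dd\|\xsys_k\|$, so the right-hand side of \eqref{eq:cost_subopt_sol} is continuous in $\|\xsys_k\|$ and vanishes at the origin; hence a sufficiently small sublevel set $\Gamma_{\hat\alpha}$ with $\hat\alpha=\hat\alpha(d,\dt)$ can be chosen so that the bound stays below $\alpha$, guaranteeing $\xmpc_F^{q_k}(\dt;\xsys_k)\in\Gamma_\alpha$. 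The main obstacle I anticipate is making the constant $a$ genuinely bounded away from structure that degenerates as the horizon or sampling parameters vary, i.e.\ obtaining a uniform descent fraction $a$ over all of $\Gamma_{\hat\alpha}$ rather than a pointwise one; this requires carefully combining the quadratic sandwich bounds \eqref{eq:bounds_cost} with the integral decrease in \eqref{eq:cost_decrease_centr_MPC_terminal_cost} so that the subtracted integral is a fixed proportion of $J^*(\xsys_k)$ uniformly, which is where the compactness of $\Gamma_\alpha$ and continuity of the optimal trajectories will be essential.
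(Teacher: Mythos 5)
Your proposal is correct and follows essentially the same route as the paper's proof: a second-order Taylor expansion of $J^*$ about $\xmpc^*(\dt;\xsys_k)$, the bounds $\|\nabla J^*(x)\|\le B\|x\|$ (using $\nabla J^*(0)=0$) and $\|\nabla^2 J^*(x)\|\le B$ on a compact set, the centralized decrease \eqref{eq:cost_decrease_centr_MPC_terminal_cost} together with the integral lower bound \eqref{eq:app_intdt_lower_bound} for the $(1-a)$ term, and the quadratic sandwich bounds on $J^*$ for the cross term $b\sqrt{J^*(\xsys_k)}\,\|\Delta\xmpc_F^{q_k}(\cdot;\xsys_k)\|_{L^\infty}$. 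The one structural difference is ordering: the paper establishes $\xmpc_F^{q_k}(\dt;\xsys_k)\in\Gamma_\alpha$ \emph{first}, via the norm estimate $\|\xmpc_F^{q_k}(\dt;\xsys_k)\|\le (e^{\hat L\dt}+Dd)\|\xsys_k\|$ and the choice $\hat\alpha=\frac{m_J\alpha}{M_J(e^{\hat L\dt}+Dd)^2}$, precisely so that the Taylor path lies in $\Gamma_\alpha$ where the Hessian bound is available, whereas you derive set invariance \emph{last} from the cost bound itself — to avoid circularity you must take the Hessian bound on a slightly enlarged compact set (a $\sigma$-neighborhood of $\Gamma_\alpha$ sized by the error bound of Lemma~\ref{lem:err_to_stop_crit}), a fix your sketch implicitly permits but should state, and note also that the quadratic lower bound on $J^*$ carries the constant $m_J$ from \eqref{eq:app_J_lower_bound} rather than $m_l$ directly.
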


\begin{proof}
To prove the first statement of the lemma, consider the bound on the
actual state trajectory at sampling time $\Delta t$ as given
in~\eqref{eq:actual_state_dt} 
\begin{align} \nonumber
\|\xmpc_F^{q_k}(\Delta t;\xsys_k)\| &\le 
\|\xmpc^*(\Delta t;\xsys_k)\| + 
\| \Delta x_F^{q_k}(\cdot;\xsys_k) \|
\\ &\le \label{eq:proof_subopt_sol_1}
\left( e^{\hat L \Delta t} + Dd \right) \|\xsys_k\|\,,
\end{align}
where the second line follows from \eqref{eq:err_to_stop_crit} in
Lemma~\ref{lem:err_to_stop_crit} 
and the bound \eqref{eq:app:upper_bound_xopt} in
Appendix~\ref{sec:helpfulbounds}. 
Moreover, note that for any $x\in\Gamma_{\alpha}$ and any 
$\alpha>0$ we have $\|x\|\le \sqrt{\frac{\alpha}{m_J}}$. Vice
versa, $\|x\| \le \sqrt{\frac{\alpha}{M_J}}$ implies
$x\in\Gamma_\alpha$.
Both statements follow from the set definition \eqref{eq:levelsetalpha}  
together with the quadratic bounds
%~\eqref{eq:bounds_Jopt}. 
\eqref{eq:app_J_upper_bound} and \eqref{eq:app_J_lower_bound}.
Hence, for $\xsys_k \in\Gamma_{\hat\alpha}$ with 
$\hat\alpha:=\frac{m_J\alpha}{M_J(e^{\hat L\Delta t}+Dd)^2}$
we have $\|\xsys_k\|\le\frac{1}{e^{\hat L\Delta t}+Dd}\sqrt{\frac{\alpha}{M_J}}$.
The bound \eqref{eq:proof_subopt_sol_1} then becomes
$\|\xmpc_F^{q}(\Delta t;\xsys_k)\|\le\sqrt{\frac{\alpha}{M_J}}$,
which implies $\xmpc_F^{q}(\Delta t;\xsys_k) \in\Gamma_\alpha$ as
reasoned above.

To ease notations in the following lines, we define
$\xmpc_{k+1}=\xmpc_F^{q_k}(\Delta t;\xsys_k)$ and  
$\xmpc_{k+1}^*=\xmpc^*(\Delta t;\xsys_k)$ that both are located
inside $\Gamma_\alpha$
and consider the following line integral along a linear path 
$\xmpc_{k+1}^* + s\, \Delta x_{k+1}$ with
$\Delta x_{k+1}=\Delta\xmpc_F^{q_k}(\Delta t;\xsys_k)$ and $s\in[0,1]$, 
which by the choice of $\hat{\alpha}$ likewise lies
completely within $\Gamma_\alpha$: 
\begin{alignat}{3} \nonumber
J^*(\xmpc_{k+1}) &= J^*(\xmpc_{k+1}^*) &&+ 
\int_0^1 
\nabla J^*( \xmpc_{k+1}^* + s\, \Delta\xmpc_{k+1})\, \Delta\xmpc_{k+1} \dd s
\\ \nonumber &=
J^*(\xmpc_{k+1}^*) &&+ \int_0^1 \Big[
\nabla J^*(x_{k+1}^*)
+ \int_0^s \nabla^2 J^*( \xmpc_{k+1}^* + s_2\, \Delta\xmpc_{k+1} )
\, \Delta\xmpc_{k+1} \dd s_2 \Big]
\Delta\xmpc_{k+1} \dd s
\\ %\label{eq:proof:J*k+1_bound}
& \label{eq:proof_subopt_sol_2}
\le J^*(\xmpc_{k+1}^*) &&+ B \|\xmpc_{k+1}^*\|
\,\|\Delta\xmpc_F^{q_k}(\cdot;\xsys_k)\|_{L^\infty}
+ \tfrac{1}{2} B \|\Delta\xmpc_F^{q_k}(\cdot;\xsys_k) \|_{L^\infty}^2\,.
\end{alignat}
Since $J^*(\cdot)$ is two times differentiable over the compact set
$\Gamma_\alpha$, there exists a constant $B>0$ such that 
$\|\nabla J^*(x)\|\le B\|x\|$ and $\|\nabla^2 J^*(x)\|\le B$ for all
$x\in\Gamma_\alpha$, which yields the last line in
\eqref{eq:proof_subopt_sol_2}. 
The first term in \eqref{eq:proof_subopt_sol_2} can be expressed as
$J^*(\xmpc_{k+1}^*)\le (1-a) J^*(\xsys_k)$ using
\eqref{eq:cost_decrease_centr_MPC_terminal_cost}
with \eqref{eq:app_intdt_lower_bound}. In addition,
\eqref{eq:app:upper_bound_xopt} and \eqref{eq:app_J_lower_bound}
give $\|\xmpc^*_{k+1}\|\le \frac{e^{\hat L \dt}}{\sqrt{m_J}}
\sqrt{J^*(\xsys_k)}$, which eventually yields
\eqref{eq:cost_subopt_sol} with $a$ given in \eqref{eq:app_intdt_lower_bound},
$b=\frac{e^{ \hat{L} \Delta t }}{ \sqrt{m_J} }B$, and
$ c = B/2 $.
\end{proof}

In contrast to the centralized MPC case, where the optimal
cost~\eqref{eq:cost_decrease_centr_MPC_terminal_cost} 
decreases from one MPC step to the next, the
relation~\eqref{eq:cost_subopt_sol} reveals the influence of the error 
$\|\Delta\xmpc_F^{q_k}(\cdot;\xsys_k)\|$ that opposes
the contraction term $(1-a)J^*(\xsys_k)$.
Moreover, Lemma~\ref{lem:cost_subopt_sol} restricts the original domain of
attraction $\Gamma_\alpha$ from the centralized MPC case to the
smaller set $\Gamma_{\hat\alpha}$. 
Based on Lemma~\ref{lem:err_to_stop_crit} and
\ref{lem:cost_subopt_sol}, the stability of the DMPC scheme is
shown in the following theorem.

\begin{theorem} \label{th:exp_stab_DMPC}
Suppose that Assumptions
\ref{as:clf_on_omega}-\ref{as:linear_convergence} hold.
Then, there exists a sufficiently small
constant $d>0$ in the ADMM stopping criterion
\eqref{eq:ADMM_convcrit}
such that the optimal cost \eqref{eq:Jopt} as well as
the error \eqref{eq:actualerror} decay exponentially for all 
$\xsys_0\in\Gamma_{\hat\alpha}$ 
and the DMPC scheme stabilizes the
origin exponentially.
\end{theorem}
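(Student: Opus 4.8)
The plan is to combine Lemmas~\ref{lem:err_to_stop_crit} and~\ref{lem:cost_subopt_sol} into a single one-step contraction estimate for the optimal cost $J^*$ along the closed-loop trajectory, and then to choose the stopping-criterion constant $d>0$ small enough that this contraction becomes strict. First I would use the quadratic lower bound \eqref{eq:app_J_lower_bound}, i.e.\ $m_J\norm{\xsys_k}^2 \le J^*(\xsys_k)$, to recast the error bound \eqref{eq:err_to_stop_crit} of Lemma~\ref{lem:err_to_stop_crit} as
\begin{equation}
\norm{\Delta\xmpc_F^{q_k}(\cdot;\xsys_k)}_{L^\infty} \le \frac{Dd}{\sqrt{m_J}}\sqrt{J^*(\xsys_k)}\,,
\end{equation}
so that the optimization error is controlled entirely by $\sqrt{J^*(\xsys_k)}$ up to the tunable factor $d$.

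Substituting this into the suboptimal cost estimate \eqref{eq:cost_subopt_sol} of Lemma~\ref{lem:cost_subopt_sol} collapses all three terms on its right-hand side into a single multiple of $J^*(\xsys_k)$, yielding $J^*(\xmpc_F^{q_k}(\dt;\xsys_k)) \le \gamma(d)\,J^*(\xsys_k)$ with
\begin{equation}
\gamma(d) := (1-a) + \frac{bDd}{\sqrt{m_J}} + \frac{cD^2d^2}{m_J}\,.
\end{equation}
Since $0 < a \le 1$ forces $\gamma(0) = 1-a < 1$, and $\gamma$ is continuous (indeed increasing) in $d$ with $d$-independent coefficients $a,b,c,D$, there exists a sufficiently small $d>0$ with $\gamma := \gamma(d) \in [0,1)$. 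This is exactly the $d$ claimed in the theorem, and it establishes exponential cost decay over a single MPC step.

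The remaining step is to propagate this contraction across MPC steps, which requires forward invariance of $\Gamma_{\hat\alpha}$. For $\xsys_k\in\Gamma_{\hat\alpha}$, i.e.\ $J^*(\xsys_k)\le\hat\alpha$, the contraction gives $J^*(\xsys_{k+1}) = J^*(\xmpc_F^{q_k}(\dt;\xsys_k)) \le \gamma\hat\alpha < \hat\alpha$, so $\xsys_{k+1}\in\Gamma_{\hat\alpha}$ as well; hence Lemma~\ref{lem:cost_subopt_sol} applies again and, by induction, $J^*(\xsys_k) \le \gamma^{k}J^*(\xsys_0)$ for all $k$, which is the asserted exponential decay of the optimal cost. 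The quadratic bounds \eqref{eq:app_J_upper_bound}, \eqref{eq:app_J_lower_bound} then transfer this to $\norm{\xsys_k} \le \gamma^{k/2}\sqrt{J^*(\xsys_0)/m_J}$, i.e.\ exponential stability at the sampling instants, while the inter-sample estimate of Appendix~\ref{sec:helpfulbounds} (bounding $\norm{\xsys(t)}$ on each interval by a constant multiple of $\norm{\xsys_k}$) extends this to continuous time. Finally, re-inserting the geometric decay of $\norm{\xsys_k}$ into \eqref{eq:err_to_stop_crit} gives $\norm{\Delta\xmpc_F^{q_k}(\cdot;\xsys_k)}_{L^\infty} \le Dd\,\gamma^{k/2}\sqrt{J^*(\xsys_0)/m_J}$, the claimed exponential decay of the error.

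The genuinely hard work sits in the two lemmas rather than in the theorem: once the error bound proportional to $d\norm{\xsys_k}$ and the quadratic cost estimate are in hand, the contraction argument is routine. The one point that needs care is the interplay between $d$ and the domain $\Gamma_{\hat\alpha}$, since $\hat\alpha=\hat\alpha(d,\dt)=\tfrac{m_J\alpha}{M_J(e^{\hat L\dt}+Dd)^2}$ itself depends on $d$; I would verify that shrinking $d$ to enforce $\gamma(d)<1$ does not collapse the domain, which is the case here because $\hat\alpha$ stays bounded away from zero (in fact grows) as $d\to 0$.
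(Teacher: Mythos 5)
Your proposal is correct and follows essentially the same route as the paper's proof: both convert the error bound of Lemma~\ref{lem:err_to_stop_crit} into a cost-based bound via \eqref{eq:app_J_lower_bound}, substitute into \eqref{eq:cost_subopt_sol} to obtain the one-step contraction factor $C_J=(1-a)+\tfrac{bdD}{\sqrt{m_J}}+\tfrac{cd^2D^2}{m_J}$, choose $d$ small enough that $C_J<1$ (the paper simply solves the quadratic explicitly, cf.~\eqref{eq:upperbound_d}), and then transfer the geometric decay of $J^*(\xsys_k)$ to $\|\xsys_k\|$ and to continuous time via the inter-sample bound \eqref{eq:bound_actual_traj}. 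Your two added points of care --- the explicit forward-invariance argument $J^*(\xsys_{k+1})\le C_J\hat\alpha<\hat\alpha$ that justifies iterating Lemma~\ref{lem:cost_subopt_sol}, and the check that $\hat\alpha(d)$ does not collapse as $d\to 0$ --- are left implicit in the paper and strengthen rather than alter the argument.
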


\begin{proof}
  The result \eqref{eq:err_to_stop_crit} of
  Lemma~\ref{lem:err_to_stop_crit} can be expressed in terms of the
  optimal cost using the quadratic bound \eqref{eq:app_J_lower_bound}, i.e.\
  \begin{equation}
    \|\Delta\xmpc_F^{q_k}(\dt;\xsys_k)\|_{L^\infty} \le 
    %D d \|\xsys_k\| \le 
    \frac{D d}{\sqrt{m_J}}\sqrt{J^*(\xsys_k)} \,.
  \end{equation}
  Combining this relation with \eqref{eq:cost_subopt_sol} of
  Lemma~\ref{lem:cost_subopt_sol} results in   
  \begin{equation} \label{eq:factor_cost_decrease}
    J^*(\xmpc_F^{q_k}(\dt;\xsys_k)) \le C_J J^*(\xsys_k)  \quad
    \text{with} \quad
    C_J = (1-a) + \frac{ b \, d \, D }{ \sqrt{m_J} } + \frac{ c \, d^2 \, D^2 }{ m_J }\,.
  \end{equation}
  In particular, $ C_J < 1 $ if 
  \begin{equation} \label{eq:upperbound_d}
    d < \frac{ \sqrt{m_J} }{ 2 \, c \, D} \left( \sqrt{b^2 + 4 ac}-b \right)\,,
  \end{equation}
  which implies exponential decay of both the optimal cost
  $J^*(\xsys_k) \le C_J^k J^*(\xsys_0) \le C_J^k \hat\alpha$ and the
  error $\|\Delta\xmpc_F^{q_k}(\dt;\xsys_k)\|\le D d\|\xsys_k\|$
  with 
  \begin{equation}
    \label{eq:upperbound_x_lin_conv}
    \|\xsys_k\|\le \frac{1}{\sqrt{m_J}}C_J^{k/2}\sqrt{\hat\alpha}\,.
  \end{equation}
  Exponential stability in continuous time requires to look
  at the state trajectory in closed-loop, i.e.\
  $\xsys(t) = \xsys(t_k+\tau) = x_F^{q_k}(\tau;\xsys_k)$,
  $k \in \Nbb_0, \, \tau \in [0, \Delta t] $.
  Employing the triangle inequality, the bound
  \eqref{eq:app:upper_bound_xopt}, the definition of the stopping
  criterion \eqref{eq:ADMM_convcrit} and the bounds on
  $d$ and $\norm{x_k}$, 
  \begin{align} \nonumber
    \|x_F^{q_k}(\tau;\xsys_k)\| 
    &\le \|\xmpc^*(\tau;\xsys_k)\| + \|\Delta x_F^{q_k}(\tau;\xsys_k)\|
    \le \left( e^{\hat L\tau} + d D \right) \|\xsys_k\|
    \\ &\le \label{eq:bound_actual_traj}
    \left( e^{\hat L\dt} + d D \right)
    \frac{1}{\sqrt{m_J}}C_J^{k/2}\sqrt{J^*(\xsys_0)}
  \end{align}
  bounds the trajectory for $ \tau \in [0,\Delta t] $ in every MPC
  step $ k $. The asymptotic decay of this bound with increasing $ k $
  implies the existence of an exponential envelope function with
  constants $ \gamma_1, \gamma_2 > 0 $ such that 
  $ \norm{ x(t) } \leq \gamma_1 e^{-\gamma_2 t} $. 
\end{proof}

The proof of Theorem~\ref{th:exp_stab_DMPC} shows that 
an explicit value for $d$ can be computed,
cf.~\eqref{eq:upperbound_d}. However, this value is usually too
conservative to be used for design purposes due to the different
Lipschitz and continuity estimates that are involved.
Nevertheless, Theorem~\ref{th:exp_stab_DMPC} states that the constant
$d>0$ in the ADMM stopping criterion~\eqref{eq:ADMM_convcrit} can
always be chosen small enough to ensure stability and incremental
improvement of the DMPC scheme. The explicit choice of $d$ depends on
the DMPC problem at hand and also represents a trade-off between
control performance and the number of ADMM iterations in each MPC
step. An interesting statement on the number of iterations
$q_k$ that are actually needed to fulfill the stopping criterion
\eqref{eq:ADMM_convcrit} follows as outcome of the next section.

\subsection{Relaxed ADMM convergence assumptions}
\label{sec:relaxed_assumptions}

The stability results in Section~\ref{sec:stability_subsection} assume
linear convergence of the ADMM algorithm
(Assumption~\ref{as:linear_convergence}).
Though linear convergence for ADMM has recently been shown in
different settings, see the literature overview at the end of
Section~\ref{sec:DMPC_preliminaries}, it 
can be restrictive in practice. 
To this end, we consider a relaxation of
Assumption~\ref{as:linear_convergence} in form of 
R-linear convergence.

\begin{assumption}
  \label{as:linear_conv_Rlinear}
  The ADMM iterates are R-linearly convergent in the sense of
  \begin{equation}
    \label{eq:linear_conv_Rlinear}
    \begin{Vmatrix}
      z^q(\cdot;\xsys_k)-z^*(\cdot;\xsys_k)
      \\
      \mu^q(\cdot;\xsys_k)-\mu^*(\cdot;\xsys_k)
    \end{Vmatrix}_{L^\infty}
    \le C_0 C^q 
    \begin{Vmatrix}
      z^0(\cdot;\xsys_k)-z^*(\cdot;\xsys_k)
      \\
      \mu^0(\cdot;\xsys_k)-\mu^*(\cdot;\xsys_k)
    \end{Vmatrix}_{L^\infty}
  \end{equation}
  for some $C_0>0$ and $C\in[0,1)$.
\end{assumption}

Assumption~\ref{as:linear_conv_Rlinear} is clearly weaker than
Assumption~\ref{as:linear_convergence}, as the right-hand side of
\eqref{eq:linear_conv_Rlinear} describes an envelope function that
allows for temporary increases of the iterates on the left-hand side of
\eqref{eq:linear_conv_Rlinear}.
R-linear convergence for ADMM has been investigated in the literature,
for instance, in \cite{Nishihara2015,Ghadimi:TAC:2015,Deng2016,Hong:MP:2017}, 
also see \cite{Han:2015} for a corresponding literature overview.
An example for R-linear ADMM convergence will also be part of the
simulation results in Section~\ref{sec:simulation_results}.

\begin{theorem} \label{th:exp_stab_DMPC_Rlinear_conv}
Suppose that Assumptions
\ref{as:clf_on_omega}-\ref{as:ADMM_boundedness} and
\ref{as:linear_conv_Rlinear} hold. Then, there exists a fixed number
of iterations $\hat q>0$ for the ADMM algorithm without evaluating the
stopping criterion~\eqref{eq:ADMM_convcrit}
such that the optimal cost \eqref{eq:Jopt} and the error
\eqref{eq:actualerror} decay exponentially for all 
$\xsys_0\in\Gamma_{\hat\alpha}$ 
and the DMPC scheme stabilizes the
origin exponentially.
\end{theorem}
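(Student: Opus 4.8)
Let me understand what needs to be proven. We have Theorem~\ref{th:exp_stab_DMPC_Rlinear_conv}, which is a relaxation of Theorem~\ref{th:exp_stab_DMPC}. The earlier theorem assumed linear convergence (Assumption 5, each iterate contracts by factor $C$), and proved exponential stability by showing the error $\|\Delta x_F^{q_k}\|$ is bounded by $Dd\|x_k\|$ via the stopping criterion, which led to $C_J < 1$ in the cost decay.

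Now we relax to R-linear convergence (Assumption 6): instead of step-by-step contraction, we only have an envelope $C_0 C^q$ times the *initial* error. The new statement claims we can use a *fixed number* $\hat q$ of iterations, without evaluating the stopping criterion, and still get exponential stability.

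**What changes.** The key difference: previously the stopping criterion $\|z^q - z^{q-1}, \mu^q - \mu^{q-1}\|_{L^\infty} \le d\|x_{k,i}\|$ was what controlled the error. Now we want to replace "run until criterion met" with "run exactly $\hat q$ times."

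**The main obstacle.** Under R-linear convergence, consecutive iterates need NOT contract — the envelope allows temporary increases. So the stopping criterion might fire "accidentally" early, or the quantity $\|z^q - z^{q-1}\|$ (difference of consecutive iterates) is no longer directly bounded by the distance-to-optimum. The crux is: I need to bound the *actual* error (distance of the ADMM iterate from the optimal centralized solution) rather than the *increment* (difference of consecutive iterates). With R-linear convergence, after $\hat q$ iterations the distance to optimum is bounded by $C_0 C^{\hat q}$ times the initial distance.

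**My proof plan.**

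First, I would relate the initial ADMM error to $\|x_k\|$. Under the warm-start initialization \eqref{eq:multipl_initialization}, $(z^0, \mu^0)(\cdot; x_{k+1}) = (z^{q_k}, \mu^{q_k})(\cdot; x_k)$. By Assumption~\ref{as:ADMM_boundedness} (boundedness) and the Lipschitz properties of $z^*, \mu^*$ (Assumption~\ref{as:unique_bounded_multipl}), I would argue the initial error $\|(z^0 - z^*, \mu^0 - \mu^*)\|_{L^\infty}$ is bounded by some constant times $\|x_k\|$ (using that $z^*, \mu^* \to 0$ as $x_k \to 0$, since the origin is the equilibrium with zero optimal multipliers). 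Call this bound $E_0 \|x_k\|$.

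Second, apply R-linear convergence \eqref{eq:linear_conv_Rlinear}: after $\hat q$ iterations, $\|(z^{\hat q} - z^*, \mu^{\hat q} - \mu^*)\|_{L^\infty} \le C_0 C^{\hat q} E_0 \|x_k\|$. This is the analogue of the stopping-criterion bound, now with $C_0 C^{\hat q} E_0$ playing the role of $d$. I then need to re-derive Lemma~\ref{lem:err_to_stop_crit}'s conclusion — that the *trajectory* error $\|\Delta x_F^{q_k}\|_{L^\infty}$ is bounded by (this ADMM error) — which goes through the same Lipschitz/Gronwall argument as the original lemma (the proof of Lemma~\ref{lem:err_to_stop_crit} in the appendix bounds the actual-vs-optimal trajectory error by the ADMM residual, and that argument only uses the distance-to-optimum, not the increment). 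Thus $\|\Delta x_F^{q_k}(\cdot;x_k)\|_{L^\infty} \le \tilde D \, C_0 C^{\hat q} E_0 \|x_k\|$ for a constant $\tilde D$.

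Third, choose $\hat q$ large enough. Since $C \in [0,1)$, the factor $C_0 C^{\hat q} E_0$ can be made arbitrarily small by taking $\hat q$ large. I would set $d_{\mathrm{eff}} := \tilde D C_0 C^{\hat q} E_0$ and require $d_{\mathrm{eff}}$ to satisfy the smallness condition \eqref{eq:upperbound_d} from Theorem~\ref{th:exp_stab_DMPC}. This gives an explicit lower bound on $\hat q$, namely $\hat q > \log\!\big(d_{\mathrm{bound}}/(\tilde D C_0 E_0)\big)/\log C$, where $d_{\mathrm{bound}}$ is the right-hand side of \eqref{eq:upperbound_d}. With this $\hat q$ fixed, the cost-decay inequality \eqref{eq:cost_subopt_sol} of Lemma~\ref{lem:cost_subopt_sol} applies verbatim (it only needs Assumption~\ref{ass:lipschitz_DMPC} and the error bound), and the contraction factor $C_J < 1$ follows exactly as in \eqref{eq:factor_cost_decrease}.

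Finally, the exponential-stability conclusion transfers directly: with $C_J < 1$ I get $J^*(x_{k+1}) \le C_J J^*(x_k)$, hence $J^*(x_k) \le C_J^k \hat\alpha$ and the state bound \eqref{eq:upperbound_x_lin_conv}, and the closed-loop trajectory bound \eqref{eq:bound_actual_traj} yields the exponential envelope $\|x(t)\| \le \gamma_1 e^{-\gamma_2 t}$. The R-linearity thus buys exactly what linearity did, except the per-step residual control is achieved by a fixed iteration count rather than an adaptive stopping test — which is the headline claim. I expect the delicate point to be Step~one: rigorously establishing that the warm-started initial ADMM error scales linearly with $\|x_k\|$, uniformly over $\Gamma_{\hat\alpha}$, since this is what keeps the bound homogeneous in $\|x_k\|$ and lets the contraction argument close.
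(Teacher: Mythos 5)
Your overall strategy matches the paper's: convert the R-linear envelope plus a fixed iteration count $\hat q$ into an effective tolerance proportional to the current state norm, then feed that into Lemma~\ref{lem:err_to_stop_crit} and the contraction argument of Theorem~\ref{th:exp_stab_DMPC}. However, there is a genuine gap precisely at the point you flag as ``delicate'': your justification that the warm-started initial error $\bigl\|(z^{0}-z^{*},\mu^{0}-\mu^{*})\bigr\|_{L^\infty}\le E_0\|\xsys_k\|$ holds uniformly in $k$ does not work. Assumption~\ref{as:ADMM_boundedness} only places the iterates in fixed $\sigma$-neighborhoods of the optimal sets, which gives a \emph{constant} bound, not one scaling with $\|\xsys_k\|$; and the observation that $z^*,\mu^*\to 0$ as the state approaches the origin bounds the size of the \emph{optimum}, not the error inherited through the warm start. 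Concretely, the initialization at step $k+1$ is $z^{q_k}(\cdot;\xsys_k)$, so its distance to $z^*(\cdot;\xsys_{k+1})$ splits (via Minkowski and the Lipschitz property of Assumption~\ref{as:unique_bounded_multipl}) into the step-$k$ residual plus a term of order $(L_z+L_\mu)(\|\xsys_k\|+\|\xsys_{k+1}\|)$; both pieces scale with $\|\xsys_k\|$, the \emph{previous} state, whereas the contraction argument at step $k+1$ needs a bound in terms of $\|\xsys_{k+1}\|$. If the closed-loop state could shrink arbitrarily fast within one sampling step, no fixed $\hat q$ would suffice.

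The paper closes this hole with two ingredients your proposal lacks: (i) an induction over MPC steps, carrying the hypothesis that after $q_k$ iterations both the increment and the distance-to-optimum are bounded by $d\|\xsys_k\|$, cf.~\eqref{eq:proof_Rlinear_4}; and (ii) the key lower bound $\|\xsys_{k+1}\| = \|\xmpc_F^{q_k}(\dt;\xsys_k)\| \ge \bigl(e^{-\hat L\dt}-Dd\bigr)\|\xsys_k\|$, obtained from the reverse triangle inequality, the lower bound \eqref{eq:app:lower_bound_xopt} on the optimal trajectory, and Lemma~\ref{lem:err_to_stop_crit}, valid once $d<e^{-\hat L\dt}/D$. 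This inequality is exactly what lets the inherited error, which scales with $\|\xsys_k\|$, be re-expressed as a constant times $\|\xsys_{k+1}\|$, producing a step-independent iteration bound $q_{k+1}$ in \eqref{eq:proof_Rlinear_10}. Note also that the paper treats $k=0$ separately via compactness of $\Gamma_{\hat\alpha}$, $\Mcal_\alpha^{\sigma_\mu}$, $\Zcal_\alpha^{\sigma_z}$, since an arbitrary initialization need not scale with $\|\xsys_0\|$ at all, and then sets $\hat q=\mymax\{q_0,q_{k+1}\}$. A minor further inaccuracy: the proof of Lemma~\ref{lem:err_to_stop_crit} uses both the increment (through the multiplier update, in \eqref{eq:bound_err_ind_3}) and the distance-to-optimum (in \eqref{eq:bound_err_act_3}), not the distance alone, so under R-linear convergence both quantities must be controlled, which is what the paper's induction hypothesis provides.
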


\begin{proof}
To simplify the following considerations, we use the compact notation 
$z^{q|k}:=z^q(\cdot;\xsys_k)$ as well as $\delta z^{q|k}:=z^{q|k}-z^{q-1|k}$ and 
$\Delta z^{q|k}:=z^{q|k}-z^{*|k}$ for $z$ as well as for the
multipliers $\mu$. 
The R-linear convergence property \eqref{eq:linear_conv_Rlinear}
together with Minkowski's inequality yields the bound 
\begin{align} \label{eq:proof_Rlinear_1}
\begin{Vmatrix} \delta z^{q|k} \\ \delta\mu^{q|k} \end{Vmatrix}_{L^\infty}
\!\!\!\le 
\begin{Vmatrix} \Delta z^{q|k} \\ \Delta\mu^{q|k} \end{Vmatrix}_{L^\infty}
\!\!\! +
\begin{Vmatrix} \Delta z^{q-1|k} \\ \Delta\mu^{q-1|k} \end{Vmatrix}_{L^\infty}
%\\ 
\!\!\!\le C_0 C^{q-1}(1+C) \begin{Vmatrix} \Delta z^{0|k} \\ \Delta\mu^{0|k} \end{Vmatrix}_{L^\infty}.
\end{align}
Hence, given the current system state 
$\xsys_k=[\xsys_{k,i}]_{i\in\Vcal}$,
the previous stopping criterion~\eqref{eq:ADMM_convcrit} can
always be satisfied for $q\ge q_k$ with
\begin{equation} 
\label{eq:proof_Rlinear_2}
q_k := 1 + \left\lceil 
  \mylog_C \left( \frac{d \mymin_{i\in\Vcal}\|\xsys_{k,i}\|}{C_0(1+C)} 
    \begin{Vmatrix} \Delta z^{0|k} \\ \Delta\mu^{0|k} \end{Vmatrix}_{L^\infty}^{-1}
\right)
\right\rceil \,.
\end{equation}
As a consequence of \eqref{eq:proof_Rlinear_1}, this
also implies
\begin{equation}
\label{eq:proof_Rlinear_3}
\begin{Vmatrix} \Delta z^{q-1|k} \\ \Delta\mu^{q-1|k} \end{Vmatrix}_{L^\infty}
\le d \|\xsys_k\| \le \frac{d}{1-C} \|\xsys_k\| 
\end{equation}
with $C\in[0,1)$. The latter bound can be used to substitute the
linear convergence bound in the third line
of~\eqref{eq:bound_err_act_3}. Hence, if $q\ge q_k$ for all MPC
steps $k$ and provided that $q_k$ is finite, the results of
Lemma~\ref{lem:err_to_stop_crit} and Theorem~\ref{th:exp_stab_DMPC}
also hold for the R-linear convergence case.

The boundedness of a fixed upper iteration limit 
$\hat q := \mymax_{k\in\Nbb_0^+} q_k$
for all $k$
can be shown by induction. In the initial MPC step $k=0$ and according to
\eqref{eq:proof_Rlinear_2}, $q_0$ depends on the initial state  
$\xsys_0\in\Gamma_{\hat\alpha}$ and the errors 
$(\Delta z^{0|0},\Delta\mu^{0|0})$ of the initial trajectories
$\mu^{0|0}\in\Mcal_\alpha^{\sigma_\mu}$ and 
$z^{0|0}\in\Zcal_\alpha^{\sigma_z}$. The compactness of the sets 
$\Gamma_{\hat\alpha}$ and $\Mcal_\alpha^{\sigma_\mu}$,
$\Zcal_\alpha^{\sigma_z}$
guarantees that there exists an upper bound on $q_0$ 
that holds for all admissible $\xsys_0$ and $(\Delta
z^{0|0},\Delta\mu^{0|0})$. 
Following the lines of the proof of Theorem~\ref{th:exp_stab_DMPC}
then shows that $J^*(\xsys_1)\le C_J J^*(\xsys_0)$ with $C_J<1$
provided that $d$ is sufficiently small.

For an arbitrary MPC step $k$, assume that $q_k$ iterations yield
\begin{equation}
\label{eq:proof_Rlinear_4}
\begin{Vmatrix} \delta z^{q_k|k} \\ \delta\mu^{q_k|k} \end{Vmatrix}_{L^\infty}
\le d \|\xsys_k\| \,, \quad
\begin{Vmatrix} \Delta z^{q_k-1|k} \\ \Delta\mu^{q_k-1|k} \end{Vmatrix}_{L^\infty}
\le d \|\xsys_k\|
\end{equation}
and MPC step $k+1$ is initialized according to
\eqref{eq:multipl_initialization}. As reasoned above, the proof of
Theorem~\ref{th:exp_stab_DMPC} then yields 
$J^*(\xsys_{k+1})\le C_J J^*(\xsys_k)$.
Next, consider the following bound in step $k+1$
\begin{align} \nonumber
\begin{Vmatrix} \delta z^{q|k+1} \\ \delta\mu^{q|k+1} \end{Vmatrix}_{L^\infty}
&\le
\begin{Vmatrix} \Delta z^{q|k+1} \\ \Delta\mu^{q|k+1} \end{Vmatrix}_{L^\infty}
+
\begin{Vmatrix} \Delta z^{q-1|k+1} \\ \Delta\mu^{q-1|k+1} \end{Vmatrix}_{L^\infty}
\\ \nonumber
&\le C_0 C^{q-1}(1+C) 
\begin{Vmatrix} \Delta z^{0|k+1} \\ \Delta\mu^{0|k+1} \end{Vmatrix}_{L^\infty}
\\ \label{eq:proof_Rlinear_5} &\le
C_0 C^{q-1}(1+C) \left( 
\begin{Vmatrix} \Delta z^{q_k|k} \\ \Delta\mu^{q_k|k} \end{Vmatrix}_{L^\infty}
+
\begin{Vmatrix} z^{*|k+1}-z^{*|k} \\ \mu^{*|k+1}-\mu^{*|k} \end{Vmatrix}_{L^\infty}
\right)
\end{align}
that follows from Minkowski's inequality, the R-linear convergence
property in Assumption~\ref{as:linear_conv_Rlinear}, and the warm start 
\eqref{eq:multipl_initialization} for MPC step $k+1$. 
Regarding the second bracket term in the last line of
\eqref{eq:proof_Rlinear_5}, Assumption~\ref{as:unique_bounded_multipl}
implies that there exist finite Lipschitz constants $L_z,L_\mu>0$ over
the compact set $\Gamma_\alpha$ such that 
\begin{equation}
\label{eq:proof_Rlinear_6}
\begin{Vmatrix}
z^{*|k+1}-z^{*|k} \\ \mu^{*|k+1}-\mu^{*|k}
\end{Vmatrix}_{L^\infty}
\le 
(L_z+L_\mu) \|\xsys_{k+1}-\xsys_k\|
\le (L_z+L_\mu) (\|\xsys_k\|+\|\xsys_{k+1}\|)
\end{equation}
for all $\xsys_k\in\Gamma_{\hat\alpha}$ (remember that 
$\xsys_{k+1}\in\Gamma_\alpha$ as shown in Lemma~\ref{lem:cost_subopt_sol}). 
The norm of the current state $\|\xsys_k\|$ can be related to 
$\|\xsys_{k+1}\|=\|\xmpc_F^{q_k}(\dt;\xsys_k)\|$
with
\begin{align} \nonumber
\|\xmpc_F^{q_k}(\dt;\xsys_k)\| &= 
\|\xmpc^*(\dt;\xsys_k) + \Delta\xmpc_F^{q_k}(\dt;\xsys_k)\|
\\ \nonumber &\ge
\|\xmpc^*(\dt;\xsys_k)\| - \|\Delta\xmpc_F^{q_k}(\dt;\xsys_k)\|
\\ &\ge \label{eq:proof_Rlinear_7}
\left( e^{-\hat L \dt}-Dd\right) \|\xsys_k\|\,,
\end{align}
where the reverse triangle inequality together with~\eqref{eq:app:lower_bound_xopt}
and \eqref{eq:err_to_stop_crit} was used (note that
Lemma~\ref{lem:err_to_stop_crit} holds as reasoned above). 
If $d$ is sufficiently small such that $d<e^{-\hat L \dt}/D$, then
$\|\xsys_k\|$ is upper bounded by $\|\xsys_{k+1}\|$ according to
\begin{equation}
\label{eq:proof_Rlinear_8}
\|\xsys_{k}\| \le \frac{1}{e^{-\hat L \dt}-Dd} \|\xsys_{k+1}\| \,.
\end{equation}
This shows together with \eqref{eq:proof_Rlinear_4} that
\eqref{eq:proof_Rlinear_5} can be expressed in 
terms of $\|\xsys_{k+1}\|$ and the iteration number $q$
\begin{equation}
\label{eq:proof_Rlinear_9}
\begin{Vmatrix} \delta z^{q|k+1} \\ \delta\mu^{q|k+1} \end{Vmatrix}_{L^\infty}
\le
C^{q-1} E_3 \|\xsys_{k+1}\|
\,, \quad
\begin{Vmatrix} \Delta z^{q-1|k+1} \\ \Delta\mu^{q-1|k+1} \end{Vmatrix}_{L^\infty}
\le
C^{q-1} E_3 \|\xsys_{k+1}\|
\end{equation}
with the constant 
$E_3:=C_0(1+C)\left(\frac{d+L_\mu+L_z}{e^{-\hat L \dt}-Dd}+L_\mu+L_z\right)$ 
and the second bound directly following from 
\eqref{eq:proof_Rlinear_5} as reasoned before.
Hence, if $q$ satisfies $q\ge q_{k+1}$ with
\begin{equation}
\label{eq:proof_Rlinear_10}
q_{k+1} := 1 + \left\lceil 
  \mylog_C \left( \frac{d}{E_3} \right) \right\rceil \,,
\end{equation}
the bounds \eqref{eq:proof_Rlinear_9} become
\begin{equation}
\begin{Vmatrix} \delta z^{q|k+1} \\ \delta\mu^{q|k+1} \end{Vmatrix}_{L^\infty}
\le d\|\xsys_{k+1}\| \,, \quad
\begin{Vmatrix} \Delta z^{q-1|k+1} \\ \Delta\mu^{q-1|k+1} \end{Vmatrix}_{L^\infty}
\le d \|\xsys_{k+1}\|\,,
\end{equation}
which completes the induction step. Note that $q_{k+1}$ in
\eqref{eq:proof_Rlinear_10} is independent of $k$. This shows that
there exists a fixed number of iterations, given by 
$\hat q=\mymax\{q_0,q_{k+1}\}$ such that exponential stability 
and exponential decay of the error \eqref{eq:actualerror} follows
along the lines of Theorem~\ref{th:exp_stab_DMPC}.
\end{proof}

Theorem~\ref{th:exp_stab_DMPC_Rlinear_conv} states that using a sufficiently
large number of iterations $\hat q$ instead of the stopping criterion 
\eqref{eq:ADMM_convcrit} is sufficient in order to
guarantee exponential stability of the DMPC scheme. In fact, 
showing stability based on the stopping criterion requires the linear
convergence assumption (see Theorem~\ref{th:exp_stab_DMPC}). 
On the one hand, using a fixed number of ADMM iterations avoids the
global communication step that is required to check the satisfaction
of \eqref{eq:ADMM_convcrit} for all MPC agents $i\in\Vcal$. 
On the other hand, the number of iterations $q_k$
for satisfying the stopping criterion~\eqref{eq:ADMM_convcrit} will in
practice be lower for achieving the same performance. 
%than using a fixed number of iterations $\hat q$ 

The following corollary is a straightforward consequence of
Theorem~\ref{th:exp_stab_DMPC_Rlinear_conv} applied to the linear
convergence case of Section~\ref{sec:ADMM_distr}.

\begin{corollary}
\label{cor:bounded_iteration_lin_conv}
Suppose that the assumptions of Theorem~\ref{th:exp_stab_DMPC} in the
linear ADMM convergence case are satisfied. 
Then, there exists an upper bound $\hat q=\hat q(d)<\infty$ on the iteration number
$q_k$ for satisfying the stopping criterion~\eqref{eq:ADMM_convcrit}, 
i.e.\ $q_k\le \hat q$ for all $k\in\Nbb_0^+$.
\end{corollary}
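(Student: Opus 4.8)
The plan is to deduce the claim from Theorem~\ref{th:exp_stab_DMPC_Rlinear_conv} by observing that linear convergence is merely a special case of R-linear convergence. First I would verify this implication: iterating the contraction~\eqref{eq:linear_convergence} of Assumption~\ref{as:linear_convergence} gives, for every $q$,
\begin{equation*}
\begin{Vmatrix} z^{q}(\cdot;\xsys_k)-z^*(\cdot;\xsys_k) \\ \mu^{q}(\cdot;\xsys_k)-\mu^*(\cdot;\xsys_k) \end{Vmatrix}_{L^\infty} \le C^{q} \begin{Vmatrix} z^{0}(\cdot;\xsys_k)-z^*(\cdot;\xsys_k) \\ \mu^{0}(\cdot;\xsys_k)-\mu^*(\cdot;\xsys_k) \end{Vmatrix}_{L^\infty},
\end{equation*}
which is precisely the R-linear bound~\eqref{eq:linear_conv_Rlinear} with $C_0=1$ and the same $C\in[0,1)$. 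Since the remaining hypotheses of Theorem~\ref{th:exp_stab_DMPC} (Assumptions~\ref{as:clf_on_omega}--\ref{as:ADMM_boundedness}) coincide with those required by Theorem~\ref{th:exp_stab_DMPC_Rlinear_conv}, all assumptions of the latter are thereby in force.

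Next I would invoke the construction contained in the proof of Theorem~\ref{th:exp_stab_DMPC_Rlinear_conv}. That proof exhibits, via~\eqref{eq:proof_Rlinear_2} for the initial step and~\eqref{eq:proof_Rlinear_10} for the induction step, explicit finite iteration counts at which the stopping criterion~\eqref{eq:ADMM_convcrit} is guaranteed to hold, and shows by induction that the uniform bound $\hat q=\mymax\{q_0,q_{k+1}\}$ is finite and independent of $k$. Specializing to $C_0=1$ only simplifies these expressions; the induction carries over verbatim, because the warm start~\eqref{eq:multipl_initialization} together with the Lipschitz estimate~\eqref{eq:proof_Rlinear_6} rescales the relative error by $\|\xsys_{k+1}\|$ and yields a $k$-independent iteration count. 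This produces the desired $\hat q=\hat q(d)<\infty$, the $d$-dependence entering through the logarithms in~\eqref{eq:proof_Rlinear_2} and~\eqref{eq:proof_Rlinear_10}.

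Finally I would close the loop between \emph{a fixed number of iterations suffices} and \emph{the stopping criterion triggers early}. Theorem~\ref{th:exp_stab_DMPC_Rlinear_conv} establishes that~\eqref{eq:ADMM_convcrit} is necessarily satisfied once $q\ge\hat q$; since $q_k$ is by definition the first iteration at which~\eqref{eq:ADMM_convcrit} holds for all $i\in\Vcal$, it follows immediately that $q_k\le\hat q$ for every $k\in\Nbb_0^+$.

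I do not anticipate a genuine obstacle, as the statement is flagged as a straightforward consequence; the only point requiring care is the reinterpretation of $\hat q$. In Theorem~\ref{th:exp_stab_DMPC_Rlinear_conv} it is used as a prescribed iteration count that \emph{replaces} the stopping criterion, whereas here it serves as an a priori upper bound on the stopping time $q_k$ that the criterion itself selects. The reconciliation is simply that~\eqref{eq:ADMM_convcrit} can fire no later than the iteration at which R-linear convergence has already forced it to hold.
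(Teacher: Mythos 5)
Your proposal is correct and takes essentially the same route as the paper: the paper justifies Corollary~\ref{cor:bounded_iteration_lin_conv} with exactly the observation you make, namely that linear convergence (Assumption~\ref{as:linear_convergence}) is a special case of R-linear convergence (Assumption~\ref{as:linear_conv_Rlinear}, with $C_0=1$), so the $k$-independent finite iteration counts constructed in the proof of Theorem~\ref{th:exp_stab_DMPC_Rlinear_conv} (via \eqref{eq:proof_Rlinear_2} and \eqref{eq:proof_Rlinear_10}) bound the first iteration at which the stopping criterion~\eqref{eq:ADMM_convcrit} is satisfied. Your closing remark distinguishing the fixed iteration count of Theorem~\ref{th:exp_stab_DMPC_Rlinear_conv} from the a priori bound on the stopping time $q_k$ merely makes explicit what the paper leaves implicit in its one-line justification.
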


This result in connection with the stopping
criterion~\eqref{eq:ADMM_convcrit} coincides with numerical
experiences showing that $q_k$ is typically largest for the
first MPC step $k=0$ and subsequently converges to a constant limit
for increasing $k$ that depends on the choice of $d>0$, i.e. on the
strictness of the stopping criterion. The numerical examples in the
following Section~\ref{sec:simulation_results} will further highlight
this behavior.

\section{Simulation Results}\label{sec:simulation_results}
%==========================================================================================%

Two simulation examples are used to demonstrate the performance of the
ADMM-based DMPC scheme. The general solution behavior and influence of
the stopping criterion are investigated for a system of coupled Van
der Pol oscillators, whereas the scalability for larger systems is
shown for a spring mass system with a variable number of masses. 
The problems are implemented within an object-oriented framework in
\textsc{Matlab}. The local OCPs \eqref{eq:cost:MPClocal} are solved
with the toolbox \textsc{GRAMPC}~\cite{Kaepernick2014}.

\subsection{Van der Pol oscillators}\label{sec:VdP_oscillators}
%------------------------------------------------------------------------------------------%

The following system describes a set of three coupled Van der Pol
oscillators \cite{Dutra2003,Dunbar2007} 
\begin{subequations}
  \label{eq:example_oscillators}
  \begin{alignat}{2}
    \ddot{\phi}_1 &= 0.1 (1-5.25 \phi_1^2) \dot{\phi}_1 - \phi_1 + u_1
    \\
    \ddot\phi_2 &= 0.01 (1-6070 \phi_2^2) \dot{\phi}_2 - 4 \phi_2
    + 0.057 \phi_1\dot{\phi}_1 + 0.1 (\dot{\phi}_2 - \dot{\phi}_3) + u_2
  \\
  \ddot\phi_3 &=
    0.01 (1-192 \phi_3^2) \dot{\phi}_3 - 4 \phi_3 + 0.057
    \phi_1\dot{\phi}_1 + 0.1 (\dot{\phi}_3 - \dot{\phi}_2) + u_3 
\end{alignat}
\end{subequations}
with the controls $u_i$
subject to the constraints
$\left| u_i\right|  \leq 1 \; \text{rad/}\text{s}^2$,  
$i \in\Vcal = \{1,2,3\}$. 
The single subsystems can be written in the form \eqref{eq:dyn_subsys}
with the states $x_i = [\phi_i,\dot{\phi}_i]^\TT$ and 
the neighborhood of each subsystem given by
\begin{alignat}{3}\label{eq:OsciNeighborSets}
\nonumber
&\Nto 1 = \{\} \quad &&\Nto 2 = \{1,3\} \quad &&\Nto 3 = \{1,2\} \\
&\Ngets 1 = \{2,3\} \quad &&\Ngets 2 = \{3\} \quad &&\Ngets 3 = \{2\}.
\end{alignat}
Note that the original representation of the dynamics \cite{Dunbar2007} 
was transformed into \eqref{eq:example_oscillators}, in order to shift
the setpoint to be stabilized to the origin. %, i.e.\ $f_i(0,0,0)=0$.

The cost functions for the MPC
formulation~\eqref{eq:gl_ocp_in_gl_quant} are chosen in quadratic form 
\begin{equation}\label{eq:ex:quadraticCost}
  V_i(x_i) = x_i^\TT P_i x_i \,, \quad
  l_i(x_i,u_i) = \gamma (x_i^\TT Q x_i + r u_i^2)\,, \quad i \in \Vcal\,.
\end{equation}
The weights  
$Q = \diag (30,30)$ and $r=0.1$ as well as the weighting matrices $P_i
= P_i^\TT > 0$ for the terminal costs are taken from \cite{Dunbar2007}. 
In particular, $P_i$ is obtained by solving the Lyapunov equation
\begin{equation}\label{eq:ex:LyapForPi}
	P_i A_{\text{d}i} + A_{\text{d}i}^\TT P_i = \hat{Q}_i
\end{equation}
with $ \hat{Q}_i = Q + r K_i^\TT K_i $ and 
$(A_{\text{d}i}, B_{\text{d}i})$ given by the jacobians 
$(\frac{\partial f_i}{\partial x_i}, \frac{\partial f_i}{\partial u_i})$
evaluated at the origin.
The block-diagonal matrix $ K = \diag(K_1,K_2,K_3)$ determines the linear
feedback law $ u = \kappa_V(x) = K x $ with $ K_1 = [3.6, 5.3] $ and $
K_2 = K_3 = [2.0, 5.0]$, cf.\  \cite{Dunbar2007}. 
The additional weighting parameter $ \gamma= 0.2 $ in
\eqref{eq:ex:quadraticCost} ensures Assumption \ref{as:clf_on_omega},
since compared to \cite{Dunbar2007} not only 
$\dot{V}(x,\kappa_V(x)) \leq 0$ but
$\dot{V}(x,\kappa_V(x)) + l((x,\kappa_V(x))) \leq 0 $ must be
satisfied.
Finally, the MPC horizon and the sampling time are set to $T = 3$ s and
$\Delta t = 0.1$ s, respecticely. 

\begin{figure}
	\centering
	\setlength\fwidth{14.8cm}
	\setlength\fheight{0.75\fwidth}
	\input{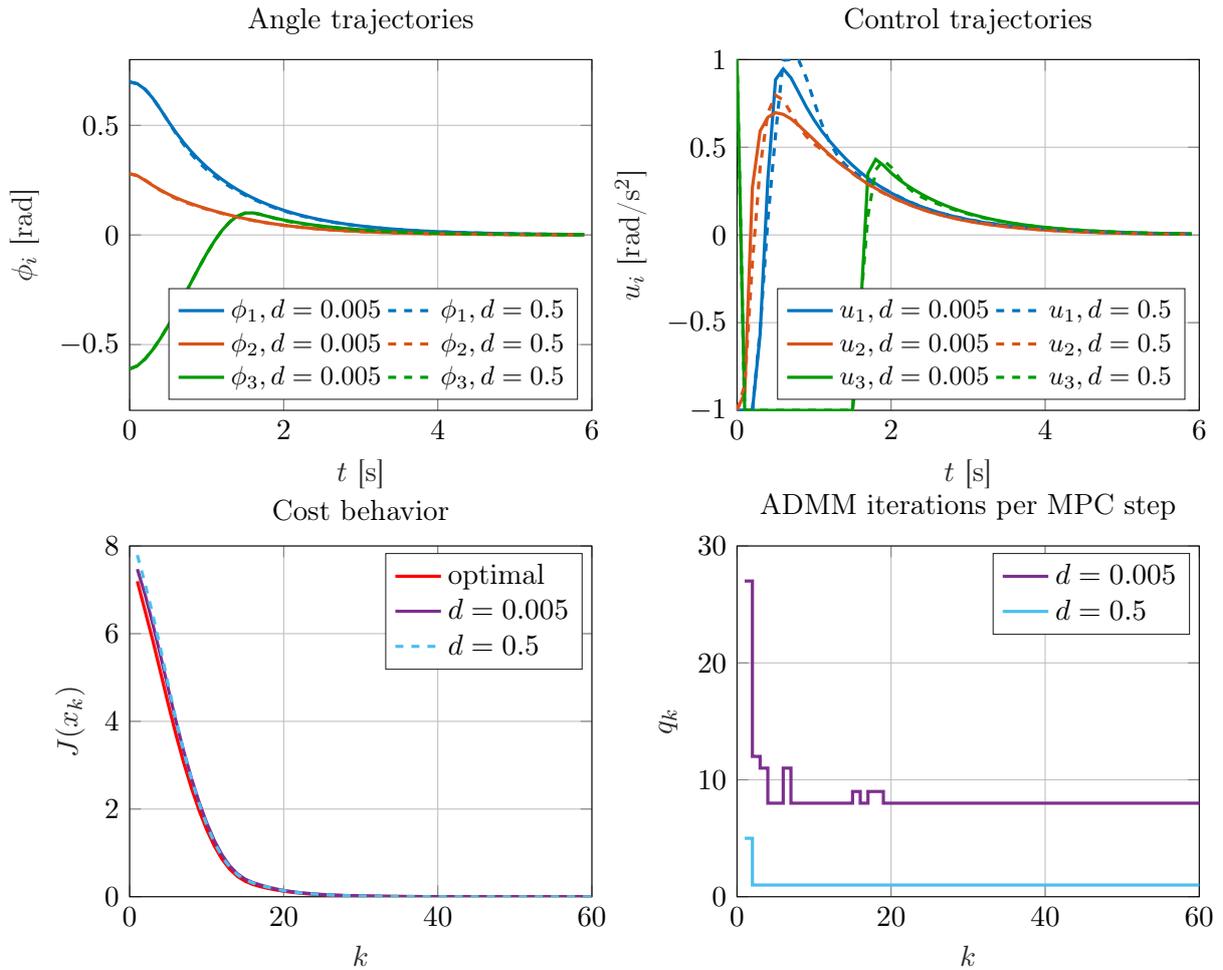}
	\caption{Solution behavior for different stopping criteria.}
	\label{fig:varD}
\end{figure}

The initial values for the DMPC simulation are chosen as 
$\phi_{1,0} = 0.698,\phi_{2,0}=0.279 ,
\phi_{3,0}=-0.611 $ and $\dot{\phi}_{i,0}=0$ corresponding to
\cite{Dunbar2007} in the untransformed system representation. 
The upper half of Figure~\ref{fig:varD} shows the angles
$\phi_i$ and constrained controls $u_i$
for two values of the constant $d$ in the stopping
criterion~\eqref{eq:ADMM_convcrit}. 
In addition, the lower half of Figure~\ref{fig:varD}
shows the exponential decay of the optimal cost value and the
iterations $q_k$ in each MPC step for satisfying the stopping criterion.
The difference between the two values $d=0.5$ and $d=0.005$ is
particularly visible in terms of the controls $u_i$ and ADMM
iterations $q_k$. The ``softer'' setting $d=0.5$ leads to 
slightly delayed trajectories with larger magnitudes, while the 
stricter choice $d=0.005$ requires a clearly higher number of
iterations $q_k$ than in case of the more relaxed setting $d=0.5$.

In both cases, it is visible that $q_k$ in the right lower part of
Figure~\ref{fig:varD} is bounded and converges to a fixed limit,
which is inline with Corollary~\ref{cor:bounded_iteration_lin_conv}.
Figure~\ref{fig:avgqkOverd} additionally shows the average number of
iterations $q_k$ for different values $d$ which again illustrates
the higher computational effort for satisfying the stricter stopping
criterion~\eqref{eq:ADMM_convcrit} as $d$ is decreased.

An interesting question concerns the convergence behavior of the ADMM
iterations in view of the linear and R-linear convergence assumptions
\eqref{eq:linear_convergence} and \eqref{eq:linear_conv_Rlinear},
respectively. Figure~\ref{fig:residualRlin} shows the residual norm
\begin{equation}
  \label{eq:ADMM_residual}
  \Delta^q_{res} =
  \begin{Vmatrix}
    z^q(\cdot;\xsys_k)-z^*(\cdot;\xsys_k)
    \\
    \mu^q(\cdot;\xsys_k)-\mu^*(\cdot;\xsys_k)
  \end{Vmatrix}_{L^\infty}
\end{equation}
plotted over the iterations $q$ for different initializations
of the ADMM algorithm. 
Although linear convergence is not guaranteed in each iteration $q$,
the algorithm is at least R-linearly convergent. The corresponding
envelope function according to the right-hand side
of~\eqref{eq:linear_conv_Rlinear} in
Assumption~\ref{as:linear_conv_Rlinear} is shown in
Figure~\ref{fig:residualRlin}.

\begin{figure}
	\centering
	\input{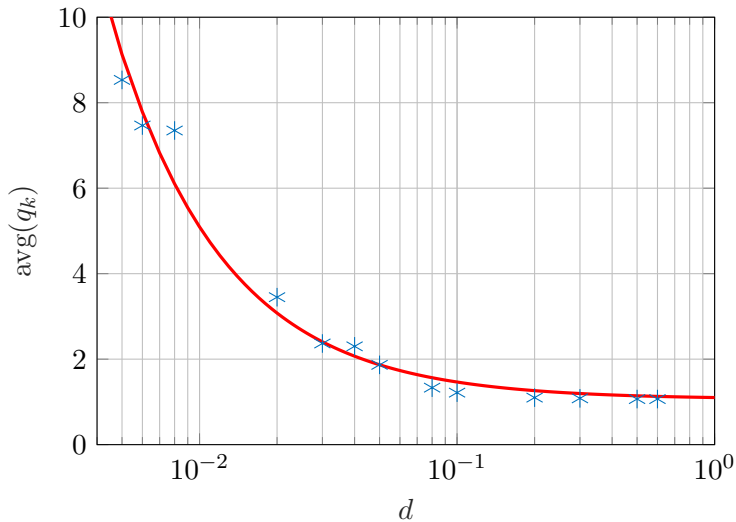}
	\caption{Average ADMM iteration numbers $q_k$ for different values
          of $d$ in the stopping criterion~\eqref{eq:ADMM_convcrit}.} 
	\label{fig:avgqkOverd}
\end{figure}
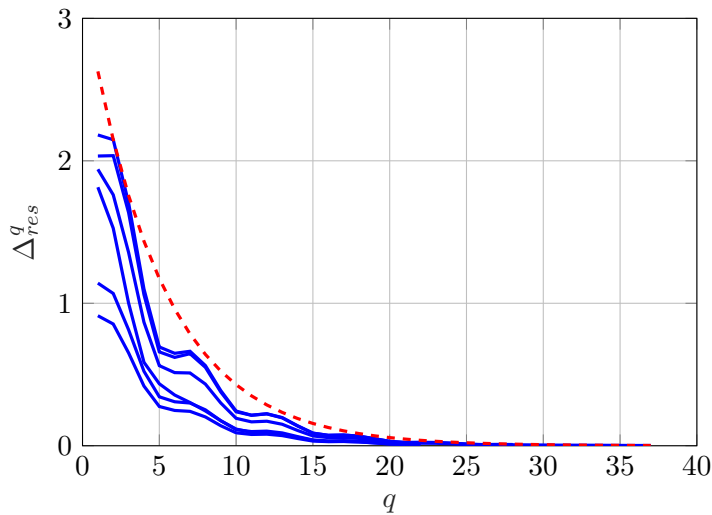
\begin{figure}
	\centering
	% This file was created by matlab2tikz.
%
%The latest updates can be retrieved from
%  http://www.mathworks.com/matlabcentral/fileexchange/22022-matlab2tikz-matlab2tikz
%where you can also make suggestions and rate matlab2tikz.
%
\begin{tikzpicture}

\begin{axis}[%
width=0.951\fwidth,
height=\fheight,
at={(0\fwidth,0\fheight)},
scale only axis,
xmin=0,
xmax=40,
xlabel style={font=\color{white!15!black}},
xlabel={$q$},
ymin=0,
ymax=3,
ylabel style={font=\color{white!15!black}},
ylabel={$\Delta^q_{res}$},
axis background/.style={fill=white},
xmajorgrids,
ymajorgrids
]
\addplot [color=blue, line width=1.2pt, forget plot]
  table[row sep=crcr]{%
1	0.912698863641407\\
2	0.854464189868871\\
3	0.651459026231364\\
4	0.419027812495113\\
5	0.275392912925149\\
6	0.247730552809886\\
7	0.241350555954783\\
8	0.202172413086743\\
9	0.140562012928065\\
10	0.0908198036584573\\
11	0.0792003820862386\\
12	0.0811571589443086\\
13	0.0711771443139968\\
14	0.0509251569528822\\
15	0.0328081165350977\\
16	0.028752183176469\\
17	0.0298393492428475\\
18	0.0267693047141792\\
19	0.0195539104948255\\
20	0.0123537619188452\\
21	0.0102276542342191\\
22	0.0106727646367774\\
};
\addplot [color=blue, line width=1.2pt, forget plot]
  table[row sep=crcr]{%
1	1.14154746790124\\
2	1.06891515070866\\
3	0.813943493416046\\
4	0.522885424838474\\
5	0.34345922816712\\
6	0.308662846952329\\
7	0.299871491330336\\
8	0.252122292035593\\
9	0.175795285605851\\
10	0.113286873709305\\
11	0.0993417869315552\\
12	0.102456687686177\\
13	0.0900957721967874\\
14	0.0641816427934822\\
15	0.0405697547018966\\
16	0.0340015183289474\\
17	0.035953867155291\\
18	0.0329869128421745\\
19	0.0245950580221763\\
20	0.0157708870671602\\
21	0.0121282212614627\\
22	0.0128034467067833\\
};
\addplot [color=blue, line width=1.2pt, forget plot]
  table[row sep=crcr]{%
1	2.03351592994246\\
2	2.03604975447899\\
3	1.63663802266013\\
4	1.06106089279129\\
5	0.658656294077157\\
6	0.62003578623949\\
7	0.645956400789584\\
8	0.550412301728539\\
9	0.377656317049481\\
10	0.238161973224923\\
11	0.213882463863707\\
12	0.223630593625109\\
13	0.195777158254257\\
14	0.138400357065956\\
15	0.0880697299628621\\
16	0.0735274911496271\\
17	0.0753990258062526\\
18	0.0671704874380689\\
19	0.0502533297387073\\
20	0.0307154006225384\\
21	0.0225276639945934\\
22	0.0237094217341066\\
23	0.023127271359691\\
24	0.018396037833854\\
25	0.0119392186860554\\
26	0.00842125963564332\\
27	0.00943210476929678\\
28	0.00860672431304019\\
};
\addplot [color=blue, line width=1.2pt, forget plot]
  table[row sep=crcr]{%
1	1.94038413212646\\
2	1.7613234246108\\
3	1.35685789662288\\
4	0.86711737797871\\
5	0.562276930119723\\
6	0.513240359250711\\
7	0.510819764022933\\
8	0.432855687472075\\
9	0.300272869240767\\
10	0.192652352094394\\
11	0.167637087045983\\
12	0.172107158052459\\
13	0.150774247069203\\
14	0.107406301432751\\
15	0.0687459779717942\\
16	0.0570608275600781\\
17	0.0580804822194096\\
18	0.0514048530264534\\
19	0.0375693334484354\\
20	0.0234840615680821\\
21	0.0171414385747934\\
22	0.01766122205169\\
23	0.0172526973364207\\
24	0.0136278889423867\\
25	0.00890749062090952\\
26	0.00629880019954922\\
27	0.00691158926810387\\
28	0.00623015192868737\\
};
\addplot [color=blue, line width=1.2pt, forget plot]
  table[row sep=crcr]{%
1	1.81457904551701\\
2	1.52672512343931\\
3	1.00144349343558\\
4	0.58587096299915\\
5	0.434902450652059\\
6	0.356068858336886\\
7	0.303299515982649\\
8	0.244189722535825\\
9	0.175731680304659\\
10	0.117182922270444\\
11	0.0934804007958409\\
12	0.0916080857907798\\
13	0.0811106009000181\\
14	0.0589675198989985\\
15	0.0377429082574434\\
16	0.0301597418894592\\
17	0.0304937988076567\\
18	0.0274262592876438\\
19	0.0203403289355565\\
20	0.0131123806996886\\
21	0.0097935107199212\\
};
\addplot [color=blue, line width=1.2pt, forget plot]
  table[row sep=crcr]{%
1	2.18139076742078\\
2	2.14812205992571\\
3	1.70435741324106\\
4	1.10065930152443\\
5	0.693370121498416\\
6	0.648706337029826\\
7	0.663071300566408\\
8	0.563046684392026\\
9	0.387535796821662\\
10	0.243514962447353\\
11	0.214402600744187\\
12	0.224602997981582\\
13	0.197759468672074\\
14	0.140709732318977\\
15	0.0887712824661045\\
16	0.0727270780887904\\
17	0.0749951523089556\\
18	0.0673087400178103\\
19	0.0487398481290787\\
20	0.0310278135884805\\
21	0.0229873406946709\\
22	0.0231147954666693\\
23	0.021982533673211\\
24	0.017160498878558\\
25	0.0108434322282097\\
26	0.0071338873297841\\
27	0.00699479991479145\\
28	0.00698967474886727\\
29	0.0061503279158777\\
30	0.00511726557039253\\
31	0.00428557952831962\\
32	0.00395797485795663\\
33	0.0036685642615471\\
34	0.00311155601250641\\
35	0.00241471685397133\\
36	0.00192420318843281\\
37	0.00185863178584714\\
};
\addplot [color=red, dashed, line width=1.2pt, forget plot]
  table[row sep=crcr]{%
1	2.6276683769871\\
2	2.14812764217308\\
3	1.75610149571425\\
4	1.43561881645455\\
5	1.17362315970245\\
6	0.95944083847523\\
7	0.7843460781461\\
8	0.64120552892126\\
9	0.524187653606919\\
10	0.428525151141779\\
11	0.350320737044274\\
12	0.286388368281888\\
13	0.234123444073471\\
14	0.191396694613209\\
15	0.156467434749363\\
16	0.127912648577979\\
17	0.104569015862262\\
18	0.0854855184374975\\
19	0.0698846957893676\\
20	0.0571309713602929\\
21	0.0467047591994696\\
22	0.0381812960631107\\
23	0.0312133365859527\\
24	0.0255170065263781\\
25	0.0208602377472281\\
26	0.0170533137741311\\
27	0.0139411407579769\\
28	0.0113969289610182\\
29	0.00931702735073345\\
30	0.0076167008631209\\
31	0.00622667830138974\\
32	0.00509033023165247\\
33	0.00416136190326259\\
34	0.00340192720351341\\
35	0.00278108680932823\\
36	0.00227354772113628\\
37	0.00185863282762201\\
};
\end{axis}
\end{tikzpicture}%
	\caption{Progress of the ADMM
          residual~\eqref{eq:ADMM_residual}
          for different initializations and envelope
          function corresponding to the R-linear convergence case \eqref{eq:linear_conv_Rlinear}.}
	\label{fig:residualRlin}
\end{figure}

\subsection{Scalable spring mass system}
%------------------------------------------------------------------------------------------%

The second example is a scalable spring-mass system that is used to
investigate the scalalility and behavior of the ADMM iterations $q_k$
for increasing numbers of subsystems $N$.
The setup is similar to \cite{Riverso2015}, where a damped spring-mass
system was considered.
Figure~\ref{fig:springMass} shows the system for $N=3$.
The masses $m_i$ are connected by springs with spring constant
$c$. The displacements $s_i$ denote the deviation from 
the equilibrium point. 
\begin{figure}
	\centering
	{
		\normalsize
		\resizebox{0.49\textwidth}{!}{\import{figs/inkscape/}{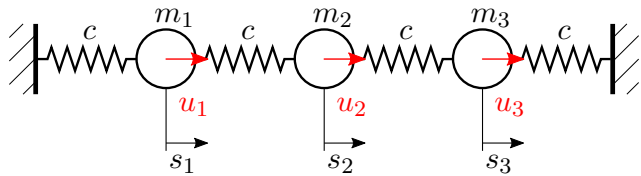}}}
	\caption{Coupled spring-mass-system for $N=3$ agents.}
	\label{fig:springMass}
\end{figure}

The equations of motion for each mass read 
\begin{subequations}
\label{eq:springMass}
\begin{align}
m_i \ddot s_i &= c ( s_{j} - 2s_{i} ) + u_{i}
\end{align}
if coupled with only one neighbor $j$, cf. $m_1$ and $m_3$ 
in Figure~\ref{fig:springMass}, or 
\begin{align}
m_i \ddot s_i &= c ( s_{j} + s_{k} - 2s_{i} ) + u_{i}
\end{align}
\end{subequations}
if coupled with two neighbors $j$ and $k$, cf. $m_2$ in
Figure~\ref{fig:springMass}. We assume normalized quantities in the
dynamics~\eqref{eq:springMass} for the sake of simplicity. 
Each mass $i$ can be
effected by a control $u_i$ subject to the constraint 
$\left| u_i\right|  \leq 1$, $i \in \Vcal = \{ 1, \dots,N \}$. 
By defining the state vector
$x_i = [ s_i, \, \dot{s}_i ]^\TT$, the dynamics can be written as
\eqref{eq:dyn_subsys} with the sending and receiving neighbor sets 
$ \Nto i = \Ngets i$, $i \in \Vcal$.

The explicit values for the masses $m_i$ and the initial (stationary)
values of $ s_i(0)=s_{i,0}$ are randomly distributed over
the intervals $[5,10]$ and $[-0.5,0.5]$,
respectively, 
whereas the spring constant is  set to $ c = 0.5 $.
The MPC formulation follows the first example in Section~\ref{sec:VdP_oscillators} with \eqref{eq:ex:quadraticCost} and \eqref{eq:ex:LyapForPi}. For $\gamma = 0.5$ and any stabilizing feedback law $ u = \kappa_V(x) = K x $ equation $\dot{V} + l = 0 $ holds in this linear case.

The scalability of the ADMM-based DMPC scheme was examined by
varying the agent numbers $N$ between 10 and 140 and investigating the
iteration numbers $q_k$. 
To obtain representative results, 20 different initial value scenarios
were simulated for all cases of agent numbers $N$.  
For each initial value scenario, the maximum, the average, and the
minimum of all ADMM iteration numbers $q_k$ were determined. 
Figure~\ref{fig:scaleErr} shows the corresponding mean value and standard deviation
for the 20 scenarios plotted over the respective number $N$ of agents.
The maximum values of $q_k$ typically occurs in the first MPC step
$k=0$, since afterwards the warm-start
\eqref{eq:multipl_initialization} of the ADMM scheme leads to
significantly lower iteration numbers. The large standard deviation of
the maximum iteration number is caused by the random initial values
which directly effect the iterations required to satisfy the stopping
criterion \eqref{eq:ADMM_convcrit} in the first MPC step.
The minimum number of iterations shown in
Figure~\ref{fig:scaleErr} corresponds to the value that the ADMM
scheme tends to after the initial phase of the simulation time. 
The corresponding green plots indicate that this value and its
standard deviation is almost independent of the agent number $N$,
which demonstrates the good scaling of the ADMM-based DMPC scheme as
the number of agents increases.

\begin{figure}[t]
	\centering
	% This file was created by matlab2tikz.
%
%The latest updates can be retrieved from
%  http://www.mathworks.com/matlabcentral/fileexchange/22022-matlab2tikz-matlab2tikz
%where you can also make suggestions and rate matlab2tikz.
%
\definecolor{mycolor1}{rgb}{0.00000,0.44700,0.74100}%
\definecolor{mycolor2}{rgb}{0.85000,0.32500,0.09800}%
\begin{tikzpicture}

\begin{axis}[%
width=0.951\fwidth,
height=\fheight,
at={(0\fwidth,0\fheight)},
scale only axis,
xmin=0,
xmax=160,
xlabel style={font=\color{white!15!black}},
xlabel={$N$},
ymin=0,
ymax=55,
ylabel style={font=\color{white!15!black}},
ylabel={$q_k$},
axis background/.style={fill=white},
xmajorgrids,
ymajorgrids,
legend style={at={(0.03,0.97)}, anchor=north west, legend cell align=left, align=left, draw=white!15!black}
]
\addplot [color=mycolor1, line width=1.2pt, draw=none, mark=x, mark options={solid, mycolor1}]
 plot [error bars/.cd, y dir = both, y explicit]
 table[row sep=crcr, y error plus index=2, y error minus index=3]{%
10	17	11.7652476655436	11.7652476655436\\
20	21.85	15.2876799382619	15.2876799382619\\
30	24.05	15.8296025152078	15.8296025152078\\
40	24.05	14.6196515186346	14.6196515186346\\
50	29.7	16.1899904099318	16.1899904099318\\
60	32.25	17.5285481429581	17.5285481429581\\
70	32.6	17.0645832061612	17.0645832061612\\
80	33.2	16.3758487124372	16.3758487124372\\
90	34.2	15.7600427464021	15.7600427464021\\
100	33.5	15.8529426124516	15.8529426124516\\
110	33.65	15.2876799382619	15.2876799382619\\
120	33.65	15.1493442623418	15.1493442623418\\
130	34.35	15.2152105335072	15.2152105335072\\
140	34.4	15.1601971935516	15.1601971935516\\
150	35.6	17.1537935040313	17.1537935040313\\
};
\addlegendentry{max}

\addplot [color=mycolor2, line width=1.2pt, draw=none, mark=x, mark options={solid, mycolor2}]
 plot [error bars/.cd, y dir = both, y explicit]
 table[row sep=crcr, y error plus index=2, y error minus index=3]{%
10	3.42333333333333	1.42685602940695	1.42685602940695\\
20	4.33083333333333	2.01552653721608	2.01552653721608\\
30	4.7725	2.36953295050075	2.36953295050075\\
40	4.89583333333333	2.54211241417759	2.54211241417759\\
50	5.88333333333333	2.46975269259355	2.46975269259355\\
60	6.25	2.41862896738449	2.41862896738449\\
70	6.37333333333333	2.39507535683374	2.39507535683374\\
80	6.48916666666667	2.2652361004194	2.2652361004194\\
90	6.72833333333333	2.12844217254577	2.12844217254577\\
100	6.81416666666667	2.00263185751279	2.00263185751279\\
110	6.86083333333333	1.97866748090729	1.97866748090729\\
120	7.03916666666667	2.3245591608728	2.3245591608728\\
130	7.1575	2.40248698507407	2.40248698507407\\
140	7.13833333333333	2.27014117086857	2.27014117086857\\
150	7.2925	2.35049127003831	2.35049127003831\\
};
\addlegendentry{avg}

\addplot [color=black!40!green, line width=1.2pt, draw=none, mark=x, mark options={solid, black!40!green}]
 plot [error bars/.cd, y dir = both, y explicit]
 table[row sep=crcr, y error plus index=2, y error minus index=3]{%
10	2	0	0\\
20	2	0	0\\
30	2	0	0\\
40	2.25	1.11803398874989	1.11803398874989\\
50	2.25	1.11803398874989	1.11803398874989\\
60	2.25	1.11803398874989	1.11803398874989\\
70	2.25	1.11803398874989	1.11803398874989\\
80	2.25	1.11803398874989	1.11803398874989\\
90	2.25	1.11803398874989	1.11803398874989\\
100	2.25	1.11803398874989	1.11803398874989\\
110	2.25	1.11803398874989	1.11803398874989\\
120	2.5	1.53896752812773	1.53896752812773\\
130	2.5	1.53896752812773	1.53896752812773\\
140	2.7	1.71985311490316	1.71985311490316\\
150	2.5	1.53896752812773	1.53896752812773\\
};
\addlegendentry{min}

\end{axis}
\end{tikzpicture}%
	\caption{Mean values and standard deviation of ADMM iteration numbers for 20 different $x_0$.}
	\label{fig:scaleErr}
\end{figure}
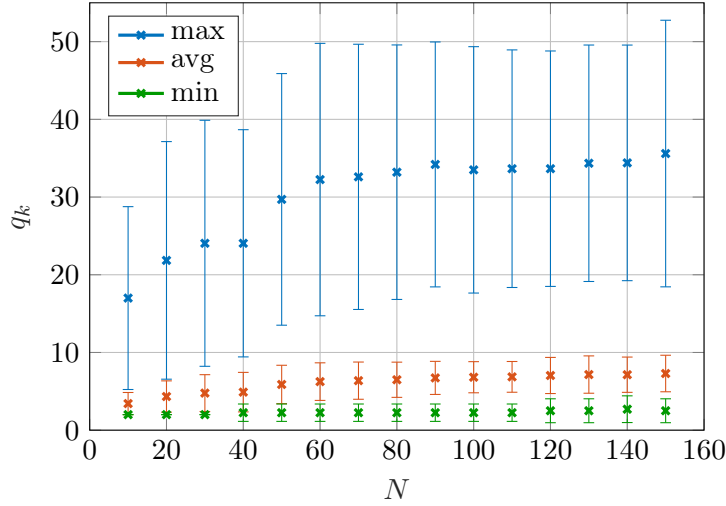

\section{Conclusions}
\label{sec:conclusions}

The distributed MPC scheme in this paper is presented for
continuous-time nonlinear, coupled systems.
The decoupling and distributed solution are based on dual
decomposition and using the alternating method of
multipliers in connection with a contracting stopping criterion that
limits the communication effort but renders the single DMPC solutions
suboptimal. 
Exponential stability is shown under the assumption of
linear convergence of the ADMM algorithm. Under the weaker assumption
of R-linear convergence, exponential stability can
still be ensured for a sufficiently large number of ADMM iterations. 
Two examples are used to illustrate the performance and scalability of
the DMPC scheme.

Current research concerns the modularization of the DMPC implementation for a flexible (re\hbox{-})configuration of coupled nonlinear systems. Further future work concerns the experimental evaluation of the ADMM-based DMPC scheme and its extension to further system classes.

\begin{appendix}

\section{Appendix}

\subsection{Proof of Lemma~\ref{lem:err_to_stop_crit}} \label{app:sec:proof_err_to_stop_crit}
%==========================================================================================%	

To prove \eqref{eq:err_to_stop_crit}, the 
error norm $\|\Delta\xmpc^{q_k}_F(\cdot;\xsys_k)\|_{L^\infty}$
can be expanded using Minkowski's inequality
\begin{equation} \label{eq:bound_err_act_1}
\|\Delta\xmpc^{q_k}_F(\cdot;\xsys_k)\|_{L^\infty}
\le \|\Delta x^{q_k}(\cdot;\xsys_k)\|_{L^\infty} + 
\|\xi^{q_k}(\cdot;\xsys_k)\|_{L^\infty}
\end{equation}
and considering the terms $\Delta x^{q_k}:=\xmpc^{q_k}-\xmpc^*$ and 
$\xi^{q_k}:=\xmpc_F^{q_k}-\xmpc^{q_k}$ separately. 
Note that $\xi^{q_k}$ can be interpreted as the \emph{individual}
error of each subsystem. In the following lines, arguments will be
omitted where it is convenient in order to simplify notations.

A bound on $\xi^{q_k}$ can be
derived in view of the dynamics~\eqref{eq:traj_ind} and
\eqref{eq:traj_act} and the Lipschitz property in
Assumption~\ref{ass:lipschitz_DMPC} 
\begin{align} 
\nonumber
\|\xi^{q_k}(\tau;\xsys_k) \| &\le
\int_0^\tau \| f(\xmpc_F^{q_k},\umpc^{q_k},\hat\xmpc_F^{q_k}) - 
f(\xmpc^{q_k},\umpc^{q_k},v^{q_k}) \| \dd s
\\ \nonumber &\le
L_f \int_0^T \|\xi^{q_k}\| + \|\hat\xmpc_F^{q_k}-v^{q_k}\| \dd s
\\ \label{eq:bound_err_ind_1}&\le
L_f \int_0^T \|\xi^{q_k}\| + \|\hat\xmpc_F^{q_k}-\hat\xmpc^{q_k}\| 
+ \|\hat\xmpc^{q_k}-v^{q_k}\| \dd s
\end{align}
where the hat notation 
$\hat x=\left[[x_j]_{j\in\Ngets{i}}\right]_{i\in\Vcal}\in\Rbb^p$ is
introduced to achieve equivalence between $f$ and $F$.
Note that by assumption any solution of the system~\eqref{eq:dyn_sys} is
bounded for bounded controls, i.e.\ $x(t)=x(t;u,\xsys_k)\in X$ with
$X$ being compact for all $u(t)\in U$ and $\xsys_k\in\Gamma_\alpha$,
which implies the existence of a finite Lipschitz constant
$L_f<\infty$ in the second line of \eqref{eq:bound_err_ind_1}. The
third line in \eqref{eq:bound_err_ind_1} eventually follows from the triangle inequality.
The second term under the integral can be bounded
by\,\footnote{\label{foo:norms}%
  The Euclidean and supremum norms for a vector $a\in\Rbb^n$ are related
  by $\|a\|\le\sqrt{n}\|a\|_\infty$ and $\|a\|_\infty\le\|a\|$.
}
\begin{equation}
\label{eq:bound_err_ind_2}
\|\hat\xmpc_F^{q_k}-\hat\xmpc^{q_k}\| \le \sqrt{p} \|\hat\xmpc_F^{q_k}-\hat\xmpc^{q_k}\|_\infty
\le \sqrt{p} \|\xmpc_F^{q_k}-\xmpc^{q_k}\|_\infty \le \sqrt{p} \|\xmpc_F^{q_k}-\xmpc^{q_k}\|
= \sqrt{p} \| \xi^{q_k} \|
\end{equation}
by taking advantage of the fact that the stacked vectors
$\hat\xmpc_F^{q_k}$ and $\hat\xmpc^{q_k}$ exclusively consist of elements of
$\xmpc_F^{q_k}$ and $\xmpc^{q_k}$. The third term in \eqref{eq:bound_err_ind_1}
can be bounded in a similar manner using the stacked vector 
$\hat z=\left[[z_j]_{j\in\Ngets{i}}\right]_{i\in\Vcal}$ and the 
stopping criterion~\eqref{eq:ADMM_convcrit} in connection with the multiplier
update~\eqref{eq:ADMM:update_mu}, i.e.\ 
\begin{multline}
\label{eq:bound_err_ind_3}
\|\hat\xmpc^{q_k}-v^{q_k}\| \le \sqrt{p} \|\hat\xmpc^{q_k}-v^{q_k}\|_\infty \le
\sqrt{p} \left(\|\hat z^{q_k}-\hat\xmpc^{q_k}\|_\infty + \|\hat z^{q_k}-v^{q_k}\|_\infty\right)
\\ \le \sqrt{p} \left( \|z^{q_k}-\xmpc^{q_k}\| + \|\hat z^{q_k}-v^{q_k}\| \right)
\le \frac{2\sqrt{p}}{\rho} N  d\|\xsys_k\| \,.
\end{multline}
Inserting both bounds in \eqref{eq:bound_err_ind_1}, applying
Gronwall's inequality, and taking the $L^\infty$-norm eventually leads to
\begin{align} %\nonumber
\label{eq:bound_err_ind_4}
\|\xi^{q_k}(\cdot;\xsys_k) \|_{L^\infty} &\le D'd\|\xsys_k\|
\end{align}
with $D'=\frac{2\sqrt{p}}{\rho} N L_f T e^{L_f(1+\sqrt{p})T}$.
The other term in \eqref{eq:bound_err_act_1} is treated in a similar
manner
\begin{align} \nonumber
\|\Delta x^{q_k}(\tau;\xsys_k)\| &\le
\int_0^T \|f(\xmpc^{q_k},\umpc^{q_k},v^{q_k}) - f(\xmpc^*,\umpc^*,\hat x^*) \| \dd s
\\ \label{eq:bound_err_act_2}
&\le L_f \int_0^T 
\|\Delta x^{q_k}\| + \|v^{q_k}-\hat x^*\| + \|\Delta u^{q_k}\| \dd s
\end{align}
with $\Delta u^{q_k}:=\umpc^{q_k}-\umpc^*$. 
The $\|v^{q_k}-\hat x^*\|$-term can be further bounded using the
triangle inequality and \eqref{eq:bound_err_ind_3}
\begin{align} \label{eq:bound_err_act_7}
\|v^{q_k}-\hat x^*\| &\le \sqrt{p} \left( \|v^{q_k}-\hat x^{q_k}\|_\infty +
  \|\hat x^{q_k}-\hat x^*\|_\infty\right)
\le 
\sqrt{p} \left(\frac{2}{\rho} N d\|\xsys_k\| + \|\Delta\xmpc^{q_k}\|\right) \,.
\end{align}
The last integral term in \eqref{eq:bound_err_act_2} 
can be expressed in terms of the optimal
feedback laws~\eqref{eq:optcontrlaw_centralized} and
\eqref{eq:optcontrlaw_distributed} and using the relation 
$\kappa^*(\cdot;\xsys_k)=\kappa(\cdot;z^*,\mu^*,\xsys_k)$
\begin{align} \nonumber
\int_0^\tau \|\Delta u^{q_k}(s;\xsys_k) \| \dd s
&=
\int_0^\tau\|\kappa(\xmpc^{q_k}(s;\xsys_k);z^{q_k-1},\mu^{q_k-1},\xsys_k) - 
\kappa(\xmpc^*(s;\xsys_k);z^*,\mu^*,\xsys_k) \|\,\dd s
\\ \nonumber &\le 
L_\kappa\int_0^\tau \|\Delta \xmpc^{q_k}\| + \|\Delta z^{q_k-1}\| 
+ \|\Delta \mu^{q_k-1}\| \,\dd s
\\ &\le \nonumber
\frac{2L_\kappa\tau}{1-C} \left\|\begin{matrix}
z^{q_k}-z^{q_k-1} \\ \mu^{q_k}-\mu^{q_k-1}
\end{matrix} \right\|_{L^\infty} +
%2L_\kappa \tau d \|\Delta W^{q-1} \|_{L^\infty} +
L_\kappa\int_0^\tau \|\Delta \xmpc^{q_k}\|\dd s
\\ &\le \label{eq:bound_err_act_3}
\frac{2L_\kappa\tau}{1-C} N d \|\xsys_k\| +
L_\kappa \int_0^\tau \|\Delta \xmpc^{q_k}\| \,\dd s
\end{align}
Note that the states $(\xmpc^{q_k},\xmpc^*)$ are defined on a compact set 
as mentioned above. The same holds for 
$(z^{q_k-1},z^*)$ and the multipliers $(\mu^{q_k-1},\mu^*)$, 
cf.\ in particular Assumption~\ref{as:linear_convergence}.
Hence, the local Lipschitz property of $\kappa$ implies that there
exists a Lipschitz constant $L_\kappa<\infty$. 
The last two lines in \eqref{eq:bound_err_act_3} follows from
the linear convergence
property~\eqref{eq:linear_convergence} that implies
\begin{equation}
\label{eq:delta_linconv}
\begin{aligned}
\left\|\begin{matrix} z^{q_k-1}-z^* \\ \mu^{q_k-1}-\mu^* \end{matrix} \right\|_{L^\infty}
&\le \frac{1}{1-C}
\left\|\begin{matrix} z^{q_k}-z^{q_k-1} \\ \mu^{q_k}-\mu^{q_k-1} \end{matrix} \right\|_{L^\infty}
\end{aligned}
\end{equation}
together with the stopping criterion
\eqref{eq:ADMM_convcrit}.
Coming back to \eqref{eq:bound_err_act_2} eventually gives
\begin{align}
\label{eq:bound_err_act_10}
\|\Delta\xmpc^{q_k}(\tau;\xsys_k)\| \le
K_1\tau d \|\xsys_k\|
+ 
K_2\int_0^\tau \|\Delta\xmpc^{q_k}(s;\xsys_k)\|\dd s
\le
K_1 \tau e^{K_2\tau} d \|\xsys_k\|
\end{align}
with $K_1 = (L_f\frac{2\sqrt{p}}{\rho} +\frac{2L_\kappa}{1-C}) N$,  
$K_2=L_f(1+\sqrt{p}+L_\kappa)$, and using Gronwall's inequality.
Finally, the bound on the error \eqref{eq:bound_err_act_1} 
follows from the $L^\infty$-norm of \eqref{eq:bound_err_act_10}
together with the bound 
\eqref{eq:bound_err_ind_4} on the individual error 
\begin{equation}
\label{eq:bound_err_act_11}
\|\Delta\xmpc^{q_k}_F(\cdot;\xsys_k) \|_{L^\infty}
\le D d \|\xsys_k\|
\end{equation}
with $D= D'+ K_1 T e^{K_2 T}$, which
completes the proof of the lemma.

\subsection{Helpful Bounds}
%==========================================================================================%
\label{sec:helpfulbounds}

\ifthenelse{\equal{0}{1}}{
The (assumed) continuous differentiability of the system function
\eqref{eq:dyn_subsys} allows one to derive a useful bound on the difference between
individual state trajectories $\Delta\xmpc(\tau;\xsys_k):=\xmpc^q(\tau;\xsys_k)-\xmpc^{q-1}(\tau;\xsys_k)$
that are obtained for two subsequent solutions
$w^{q-1}(\tau;\xsys_k)$, $w^{q}(\tau;\xsys_k)$ of the
subproblems~\eqref{eq:cost:MPClocal}. 
With $\Gamma_\alpha$ being compact and the assumed boundedness of the
optimal trajectories $w^{q}(\tau;\xsys_k)\in W$ and
$\xmpc^q(\tau;\xsys_k)\in X$ with $W$ and $X$ being compact, there
exists a finite Lipschitz constant $L_f>0$ such that the following estimate
holds (omitting arguments):
\begin{align} \nonumber
\|\Delta x(\tau;\xsys_k)\| &\le
\int_0^\tau \|f(\xmpc^q,\umpc^q,v^q)-f(\xmpc^{q-1},\umpc^{q-1},v^{q-1})\| \dd s
\\ \nonumber &\le
L_f \int_0^\tau  \|\Delta x(s;\xsys_k)\| + \|\Delta w(s;\xsys_k)\| \dd
s
\\ &\le \label{eq:bound_state_diff_1}
L_f\tau e^{L_f\tau} \|\Delta w(\cdot;\xsys_k)\|_{L^\infty}
\end{align}
with $\Delta
w(\tau;\xsys_k):=w^q(\tau;\xsys_k)-w^{q-1}(\tau;\xsys_k)$. The last
line is follows from Gronwalls inequality after using the supremum
norm for the control part over $\tau\in[0,T]$ under the integral.  
As \eqref{eq:bound_state_diff} holds for all $\tau\in[0,T]$, we have
\begin{equation} \label{eq:bound_state_diff_2}
\|\xmpc^q(\cdot;\xsys_k)-\xmpc^{q-1}(\cdot;\xsys_k)\|_{L^\infty} \le
D_x \|w^q(\cdot;\xsys_k)-w^{q-1}(\cdot;\xsys_k)\|_{L^\infty} \,, \quad
D_x = L_f T e^{L_f T}\,.
\end{equation}
}{}

Some bounds on the optimal state trajectory $\xmpc^*(\tau;\xsys_k)$
can be derived using Assumption~\ref{ass:lipschitz_DMPC}.
Note that for $\xsys_k\in\Gamma_\alpha$, the optimal trajectory
satisfies $\xmpc^*(\tau;\xsys)\in \Gamma_\alpha$
for all $\tau\in[0,T]$, which follows from~\eqref{eq:cost_decrease_centr_MPC_terminal_cost}.
Considering the equilibrium $F(0,0)=0$ as well as
$\kappa^*(0;\xsys_k)=0$ for the optimal feedback~\eqref{eq:optcontrlaw_centralized},  
we have the Lipschitz estimates 
$\|F(x,u)\|=\|[f_i(x_i,u_i, [x_j]_{j\in\Ngets{i}})]_{i\in\Vcal}\|
\le L_f(\|x\|+\|u\|)$ and $\|\kappa^*(x;\xsys_k)\|\le
L_{\kappa^*}\|x\|$ for all $x\in X_\alpha$, $u\in U$ with some
finite Lipschitz constants $L_f,L_{\kappa^*}>0$.
Using Gronwall's inequality, the optimal state trajectory can be
bounded by
\begin{align} 
\|\xmpc^*(\tau; \xsys_k) \| &\le
\|\xsys_k\| + \int_{0}^\tau \| F( \xmpc^*(s;\xsys_k),\umpc^*(s;\xsys_k))\| 
\dd s \nonumber \\
&\le 
\|\xsys_k\| + L_f \int_{0}^\tau \|\xmpc^*(s;\xsys_k)\| + 
\|\kappa^*(\xmpc^*(s;\xsys_k);\xsys_k) \| \dd s \nonumber \\
&\le
\|\xsys_k\| + L_f(1+L_{\kappa^*}) \int_{0}^\tau
\|\xmpc^*(s;\xsys_k)\| \dd s
\nonumber \\
&\le \label{eq:app:upper_bound_xopt}
\|\xsys_k\| e^{ \hat{L} \tau}
\end{align}
with $\hat{L} := L_f(1+L_{\kappa^*})$.
A lower bound is derived in a similar manner using an inverse
formulation of Gronwall's inequality \cite{Gollwitzer1969}
\begin{align} \label{eq:app:lower_bound_xopt}
\|\xmpc^*(\tau; \xsys_k) \| &\ge
\|\xsys_k\| - \int_{0}^\tau \| F( \xmpc^*(s;\xsys_k),\umpc^*(s;\xsys_k))\| 
\dd s \nonumber \\ \nonumber &\ge
\norm{x_k} - \hat L \int_{0}^\tau \| \xmpc^*(s;\xsys_k)\| \dd s
\\ &\ge \|\xsys_k\| e^{ -\hat{L} \tau } \,.
\end{align}
The estimates \eqref{eq:app:upper_bound_xopt} and
\eqref{eq:app:lower_bound_xopt} can be used for an 
upper bound on the optimal cost~\eqref{eq:Jopt}
\begin{align}
\label{eq:app_J_upper_bound}
J^*(\xsys_k) &\le \nonumber
M_V \|\xmpc^*(T;\xsys_k)\|^2 + M_l \int_0^T 
\|\xmpc^*(\tau;\xsys_k)\|^2 + \|\umpc^*(\tau;\xsys_k)\|^2
\dd\tau
\\ &\le \nonumber
M_V e^{2\hat L T} \|\xsys_k\|^2 + M_l \|\xsys_k\|^2 \int_0^T 
e^{2\hat L\tau} + L_{\kappa^*}^2 e^{2\hat L T} \dd\tau
\\ &=
M_J \|\xsys_k\|^2
\end{align}
with $M_J:= M_V e^{2\hat L T}+\frac{M_L}{2\hat L}\left(e^{2\hat L T}(1+L_{\kappa^*}^2)-1-L_{\kappa^*}^2 \right)$
as well as for a lower bound
\begin{align} \nonumber
\label{eq:app_J_lower_bound}
J^*(\xsys_k) &\ge
m_l \int_0^T \|\xmpc^*(\tau;\xsys_k)\|^2 \dd\tau
\\ &\ge \nonumber
m_l \|\xsys_k\|^2 \int_0^T e^{-2\hat L \tau}
\\ &= 
m_J \|\xsys_k\|^2
\end{align}
with $m_J:=\frac{m_l}{2\hat L} \left(1-e^{-2\hat L T} \right)$.
In addition, the integral in 
\eqref{eq:cost_decrease_centr_MPC_terminal_cost} can be lower bounded by
\begin{align} \nonumber
\int_0^{\Delta t} l(\xmpc^*(\tau;\xsys_k),\umpc^*(\tau;\xsys_k)) \dd\tau
&\ge 
m_l \int_0^{\Delta t} \|\xmpc^*(\tau;\xsys_k)\|^2 \dd\tau
\\ \label{eq:app_intdt_lower_bound}
&\ge a J^*(\xsys_k) \,, \quad
a:= \frac{m_l (1-e^{-2\hat L\Delta t})}{2\hat L M_J}
\end{align}
with $0<a\le 1$.

\end{appendix}
\printbibliography

\end{document}